\theoremstyle{plain}
\newtheorem{theorem}{Theorem}[section]
\newtheorem{lemma}[theorem]{Lemma}
\newtheorem{claim}[theorem]{Claim}
\newtheorem{proposition}[theorem]{Proposition}
\newtheorem{definition}[theorem]{Definition}
\newcommand*{\N}{{\mathds{N}}}
\DeclareMathOperator*{\oddgirth}{odd-girth}
\newcommand{\ecycle}[1]{{\bigcup_{i\in[#1]} A_i \times A_{i+1}}}
\newcommand{\abs}[1]{\left\vert {#1} \right\vert}
\newcommand{\cupdot}{\mathbin{\mathaccent\cdot\cup}}
\newcommand*{\linearthres}{{\delta_{\text{lin-rem}}}}
\newcommand*{\polythres}{{\delta_{\text{poly-rem}}}}
\let\poly\relax
\newcommand{\poly}{\text{poly}}
\title{ %
The Minimum Degree Removal Lemma Thresholds
}
\author{
Lior Gishboliner\thanks{Department of Mathematics, ETH, Z\"urich, Switzerland. Research supported in part by SNSF grant 200021\_196965. Email: \textbf{\{lior.gishboliner, zhihan.jin, benjamin.sudakov\}@math.ethz.ch}.}
\and 
Zhihan Jin\footnotemark[1]
\and Benny Sudakov\footnotemark[1]
}
\date{}
\begin{document}

\maketitle
\begin{abstract}
The graph removal lemma is a fundamental result in extremal graph theory which says that for every fixed graph $H$ and $\varepsilon > 0$, if an $n$-vertex graph $G$ contains $\varepsilon n^2$ edge-disjoint copies of $H$ then $G$ contains $\delta n^{v(H)}$ copies of $H$ for some $\delta = \delta(\varepsilon,H) > 0$. The current proofs of the removal lemma give only very weak bounds on $\delta(\varepsilon,H)$, and it is also known 
that $\delta(\varepsilon,H)$ is not polynomial in $\varepsilon$ unless $H$ is bipartite. Recently, Fox and Wigderson initiated the study of minimum degree conditions guaranteeing that $\delta(\varepsilon,H)$ depends polynomially or linearly on $\varepsilon$. In this paper we answer several questions of Fox and Wigderson on this topic.
\end{abstract}
\section{Introduction}
The graph removal lemma, first proved by Ruzsa and Szemer\'edi 
 \cite{ruzsatriple}, is a fundamental result in extremal graph theory. It also have important applications to additive combinatorics and property testing. The lemma states that for every fixed graph $H$ and $\varepsilon > 0$, if an $n$-vertex graph $G$ contains $\varepsilon n^2$ edge-disjoint copies of $H$ then $G$ it contains $\delta n^{v(H)}$ copies of $H$, where $\delta = \delta(\varepsilon,H) > 0$. Unfortunately, the current proofs of the graph removal lemma give only very weak bounds on $\delta = \delta(\varepsilon,H)$ and it is a very important problem to understand the dependence of $\delta$ on $\varepsilon$. The best known result, due to Fox \cite{Fox_removal_lemma}, proves that $1/\delta$ is at most a tower of exponents of height logarithmic in $1/\varepsilon$.
 Ideally, one would like to have better bounds on $1/\delta$, where an optimal bound would be that $\delta$ is polynomial in $\varepsilon$. 
 However, it is known \cite{alon2002testing} that $\delta(\varepsilon,H)$ is only polynomial in $\varepsilon$ if $H$ is bipartite. 
 This situation led Fox and Wigderson \cite{fox2021minimum} to initiate the study of minimum degree conditions which guarantee that $\delta(\varepsilon,H)$ depends polynomially or linearly on $\varepsilon$.
 Formally, let $\delta(\varepsilon, H ; \gamma)$ be the maximum $\delta \in[0,1]$ such that if $G$ is an $n$-vertex graph with minimum degree at least $\gamma n$ and with $\varepsilon n^2$ edge-disjoint copies of $H$, then $G$ contains $\delta n^{v(H)}$ copies of $H$. 
\begin{definition}
Let $H$ be a graph.
\begin{enumerate}
    \item The {\em linear removal threshold} of $H$, denoted $\linearthres(H)$,
    is the infimum $\gamma$ such that $\delta(\varepsilon, H ; \gamma)$ depends linearly on $\varepsilon$, i.e. $\delta(\varepsilon, H ; \gamma) \geq \mu \varepsilon$ for some $\mu = \mu(\gamma) > 0$ and all $\varepsilon > 0$.
    \item The {\em polynomial removal threshold} of $H$, denoted $\polythres(H)$, is the infimum $\gamma$ such that $\delta(\varepsilon, H ; \gamma)$ depends polynomially on $\varepsilon$, i.e. $\delta(\varepsilon, H ; \gamma) \geq \mu \varepsilon^{1/\mu}$ for some $\mu = \mu(\gamma) > 0$ and all $\varepsilon > 0$.
\end{enumerate}
\end{definition}

Trivially, $\linearthres(H) \geq \polythres(H)$.
Fox and Wigderson \cite{fox2021minimum} initiated the study of $\linearthres(H)$ and $\polythres(H)$, and proved that $\linearthres(K_r) = \polythres(K_r) = \frac{2r-5}{2r-3}$ for every $r \geq 3$, where $K_r$ is the clique on $r$ vertices. They further asked to determine the removal lemma thresholds of odd cycles. Here we completely resolve this question. The following theorem handles the polynomial removal threshold. 
\begin{theorem} \label{thm:poly threshold of odd cycle}
  $\polythres(C_{2k+1}) = \frac{1}{2k+1}$.
\end{theorem}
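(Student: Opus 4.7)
\emph{Lower bound} ($\polythres(C_{2k+1}) \geq \frac{1}{2k+1}$). For every $\gamma < \frac{1}{2k+1}$, I construct a family of $n$-vertex graphs of minimum degree at least $\gamma n$, containing $\Omega(n^2)$ edge-disjoint $C_{2k+1}$-copies but only super-polynomially (in $1/\varepsilon$) many total copies. The backbone is the classical super-polynomial removal construction for odd cycles: on the vertex set $V_0 = A_1 \cupdot \cdots \cupdot A_{2k+1}$ with each $A_i \cong \Z_N$, add edges $(x,i) \sim (y,i+1)$ iff $y - x$ lies in an appropriately chosen translate of a Behrend-type set $S \subseteq \Z_N$ with no nontrivial solutions to the resulting $(2k+1)$-term zero-sum equation, so that $C_{2k+1}$-copies inside $V_0$ correspond to trivial solutions and are super-polynomially rare compared with the $\Theta(|S| N)$ edge-disjoint copies. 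This alone yields only $o(n)$ minimum degree, so I supplement it with a bipartite graph on $V \setminus V_0$ together with cross-edges from each $A_i$ to a dedicated slice of $V \setminus V_0$, arranged so that any cycle using a supplementary edge is trapped in a bipartite subgraph and thus cannot have odd length. The condition $\gamma < \frac{1}{2k+1}$ is precisely what allows the $(2k+1)$ dedicated slices of size $\approx \gamma n$ to fit into $V \setminus V_0$ while maintaining minimum degree $\geq \gamma n$.

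\emph{Upper bound} ($\polythres(C_{2k+1}) \leq \frac{1}{2k+1}$). Fix $\eta > 0$ and assume $G$ is an $n$-vertex graph with $\delta(G) \geq (\frac{1}{2k+1} + \eta)n$ and $\varepsilon n^2$ edge-disjoint copies of $C_{2k+1}$. The union of the edge-disjoint copies uses at least $(2k+1)\varepsilon n^2$ edges, each lying in some $C_{2k+1}$. The plan reduces everything to the following path-counting lemma: every such edge $uv$ extends to at least $\mu \varepsilon^{C-1} n^{2k-1}$ length-$2k$ paths from $u$ to $v$ in $G$, where $\mu, C > 0$ depend only on $k$ and $\eta$. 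Summing over these edges (and correcting for the $2(2k+1)$-fold overcounting of each $C_{2k+1}$) then gives the desired $\gtrsim \mu \varepsilon^{C+1} n^{2k+1}$ copies of $C_{2k+1}$, establishing polynomial removal.

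\emph{Main obstacle.} The hard part is the path-counting lemma at the tight threshold $\frac{1}{2k+1}$. My plan is induction on $k$: the base case $k=1$ is the triangle case $\polythres(K_3) = \frac{1}{3}$ of Fox and Wigderson. For the inductive step, given a $C_{2k+1}$ through $uv$, I would reroute its vertices one at a time through their neighborhoods of size $\geq (\frac{1}{2k+1} + \eta)n$, reducing the original extension problem to a length-$(2k-2)$ extension problem in a carefully chosen subgraph of $G$. The delicate point is that the threshold $\frac{1}{2k+1}$ is tight, so the induction hypothesis demands that the subgraph satisfy a shifted minimum-degree condition at threshold $\frac{1}{2k-1}$, relative to its own order; achieving this likely requires passing to a ``core'' portion of $G$ (e.g.\ a suitably cleaned common neighborhood) where the effective minimum degree actually improves. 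Balancing these parameters across $k$ induction steps while preserving polynomial dependence on $\varepsilon$, and not exhausting the $\eta$-slack, is where I expect most of the technical work to lie.
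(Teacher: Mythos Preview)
Your lower bound is the same construction the paper sketches (following Fox--Wigderson): a Behrend/Ruzsa--Szemer\'edi configuration on $2k+1$ parts, padded with bipartite pieces to push the minimum degree up to just below $\tfrac{n}{2k+1}$ without creating new odd cycles.

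For the upper bound, the paper takes a completely different route from yours. It proves the general inequality $\polythres(H)\le\delta_{\text{hom}}(\mathcal I_H)$ and then simply quotes the Ebsen--Schacht theorem $\delta_{\text{hom}}(\{C_3,\dots,C_{2k+1}\})=\tfrac{1}{2k+1}$. The proof of the general inequality is a sampling argument: if $\delta(G)\ge(\delta_{\text{hom}}(\mathcal I_H)+\alpha)n$ and $G$ is $\varepsilon$-far from $H$-free, then $G$ is $\varepsilon$-far from being homomorphic to the bounded-size target graph $F$ supplied by the homomorphism threshold; a property-testing result of Nakar--Ron then says a uniform sample of $q=\poly_{H,\alpha}(1/\varepsilon)$ vertices is, with constant probability, not homomorphic to $F$. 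Since the sample also inherits the minimum-degree condition, it must contain some $H'\in\mathcal I_H$, yielding $\poly(\varepsilon)n^{v(H')}$ copies of $H'$ in $G$ and hence $\poly(\varepsilon)n^{v(H)}$ copies of $H$ via a K\H{o}v\'ari--S\'os--Tur\'an blow-up. This black-boxes all the structural difficulty into Ebsen--Schacht.

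Your upper-bound plan has a genuine gap at exactly the point you flag. First, the per-edge path-counting lemma cannot hold with a bound independent of $\varepsilon$: that would give $\linearthres(C_{2k+1})\le\tfrac{1}{2k+1}$, contradicting $\linearthres(C_{2k+1})=\tfrac14$ for $k\ge2$ (established elsewhere in the paper). So the per-edge bound must genuinely involve the \emph{global} parameter $\varepsilon$, yet you give no mechanism by which a single edge ``sees'' the $\varepsilon n^2$ edge-disjoint copies; one can plausibly attach a local gadget containing one $C_{2k+1}$ with few extensions to a graph where $\varepsilon$ is large, so the uniform per-edge statement is suspect. Second, and more fundamentally, your inductive step asks for a subgraph in which the relative minimum degree rises from $\tfrac{1}{2k+1}$ to $\tfrac{1}{2k-1}$. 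Obtaining such a density increment is precisely the content of the homomorphism-threshold theorem you would be trying to avoid; you have not proposed any concrete mechanism for it, and ``passing to a core'' is not a proof. In effect your induction, if carried out, would amount to re-deriving Ebsen--Schacht inside the argument, whereas the paper simply invokes it.
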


\Cref{thm:poly threshold of odd cycle} also answers another question of Fox and Wigderson \cite{fox2021minimum}, of whether $\linearthres(H)$ and $\polythres(H)$ can only obtain finitely many values on $r$-chromatic graphs $H$ for a given $r \geq 3$. \Cref{thm:poly threshold of odd cycle} shows that $\polythres(H)$ obtains infinitely many values for $3$-chromatic graphs. In contrast, $\linearthres(H)$ obtains only three possible values for $3$-chromatic graphs. 
Indeed, the following theorem determines $\linearthres(H)$ for every $3$-chromatic $H$. 
An edge $xy$ of $H$ is called {\em critical} if $\chi(H - xy) < \chi(H)$.

\begin{theorem}\label{thm:linear threshold}
For a graph $H$ with $\chi(H) = 3$, it holds that
$$
\linearthres(H) = 
\begin{cases}
    \frac{1}{2} & \text{ $H$ has no critical edge}, \\
    \frac{1}{3} & \text{ $H$ has a critical edge and contains a triangle}, \\
    \frac{1}{4} & \text{ $H$ has a critical edge and } \oddgirth(H) \geq 5.
\end{cases}
$$
\end{theorem}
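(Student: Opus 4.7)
The proof of \Cref{thm:linear threshold} treats each of the three sub-cases separately by establishing matching upper and lower bounds. A key structural observation is that whenever $H$ is $3$-chromatic with a critical edge $xy$, the graph $H - xy$ is bipartite with some bipartition $(A,B)$ and $x,y$ lie in the same part (say $A$). In case (ii) there must exist $z \in B$ adjacent to both $x$ and $y$, and every triangle of $H$ passes through $xy$ (since $H-xy$ is triangle-free); in case (iii), $H$ is triangle-free and $xy$ sits on a shortest odd cycle of length at least $5$. In case (i), every edge of $H$ is non-critical, so deleting any single edge keeps $H$ $3$-chromatic.

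The plan for the upper bounds is \emph{find-a-seed, count-seeds, extend-to-$H$}. In case (ii) the seed is the triangle $xyz$: any edge-disjoint copy of $H$ contributes an edge-disjoint triangle, so the Fox--Wigderson bound $\linearthres(K_3) = 1/3$ yields $\Omega_\eta(\varepsilon n^3)$ triangles in $G$ once the minimum degree exceeds $(1/3+\eta)n$. A greedy embedding argument, exploiting that $H - \{x,y,z\}$ is bipartite and that each remaining $w \in V(H) \setminus \{x,y,z\}$ is constrained only by its $H$-neighbourhood among $\{x,y,z\}$, then extends a positive fraction of these triangles to copies of $H$. In case (iii) the seed is a shortest odd cycle $C_{2k+1}$ of $H$; we establish a $C_{2k+1}$-analogue of the Fox--Wigderson triangle argument to obtain $\Omega_\eta(\varepsilon n^{2k+1})$ such cycles in $G$ once the minimum degree exceeds $(1/4+\eta)n$, the threshold $1/4$ being the point beyond which no balanced $4$-partite obstruction can prevent the forced supersaturation of short odd cycles. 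Case (i) reduces to a stability statement in the spirit of Andr\'asfai--Erd\H{o}s--S\'os: a graph with min degree $>(1/2+\eta)n$ either contains no copy of $H$ at all (contradicting the existence of $\varepsilon n^2$ edge-disjoint copies) or contains $\Omega(n^{v(H)})$ copies, the no-critical-edge condition being crucial in propagating a single copy of $H$ into many.

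For the lower bounds we construct, for every $\gamma$ strictly below the claimed threshold, an $n$-vertex graph $G$ with minimum degree $\geq \gamma n$ carrying $\varepsilon n^2$ edge-disjoint copies of $H$ but only $o(\varepsilon n^{v(H)})$ copies in total. Each construction is a near-bipartite (or near-$4$-partite) base graph of minimum degree $\gamma n$ into which a sparse Behrend-type family of twisted edges is inserted to create the odd cycles required by $H$. For case (i) the base is almost-complete bipartite on two parts of size roughly $n/2$; for case (ii) we lift the Fox--Wigderson $K_3$ lower-bound construction by blowing up the non-triangle vertices of $H$ along the bipartition $(A,B)$; for case (iii) the base is a $4$-partite near-bipartite template of minimum degree close to $n/4$, into which Behrend-type internal edges generate the required short odd cycles while keeping the total $H$-count subleading.

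The principal technical obstacle lies in the extension step of the upper bounds. After producing $\Omega(\varepsilon n^{|\mathrm{seed}|})$ seeds, one must embed the remaining $v(H) - |\mathrm{seed}|$ vertices respecting a prescribed adjacency pattern to the seed, and show that a positive fraction of seeds each admit $\Omega(n^{v(H) - |\mathrm{seed}|})$ such embeddings. The minimum-degree hypothesis only guarantees sufficiently large common neighbourhoods after averaging over the seeds produced by the Fox--Wigderson-type counting step; making this averaging quantitative without losing the linear dependence on $\varepsilon$ is where most of the technical work lies. A secondary difficulty in the lower bounds is calibrating the Behrend-type perturbation so that the edge-disjoint copies it produces are genuine embeddings of $H$ rather than mere homomorphic images, which requires aligning the arithmetic-progression-free structure of the Behrend set with the critical-edge structure of $H$.
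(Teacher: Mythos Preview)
Your overall \emph{seed-then-extend} plan is reasonable for case (ii), and indeed the paper does something close: from each edge-disjoint copy $H_i$ it extracts a triangle, observes that by the degree-sum inequality some edge $a_ib_i$ of that triangle has $\deg_G(a_i,b_i)\ge\alpha n$, and then proves a structural lemma showing that any edge of codegree $\ge\alpha n$ admits $\poly(\alpha)n^{h-2}$ copies of $H$ mapping the critical edge $xy$ to it. Your description of the extension (``each remaining $w$ is constrained only by its $H$-neighbourhood among $\{x,y,z\}$'') is not correct, though: vertices outside the seed have edges among themselves, and the actual argument uses a partition $V(H)=\{x\}\cup\{y\}\cup A_3\cup B$ with $E(H)\subseteq(A_3\times B)\cup(\{x\}\times A_3)\cup(\{y\}\times A_3)\cup\{xy\}$, followed by a K\H{o}v\'ari--S\'os--Tur\'an count of complete bipartite graphs between $N_G(a,b)$ and $V(G)$.

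The genuine gap is in case (iii). Your plan assumes as a black box a ``$C_{2k+1}$-analogue of the Fox--Wigderson triangle argument'' yielding $\Omega_\eta(\varepsilon n^{2k+1})$ cycles once $\delta(G)>(1/4+\eta)n$. But this is precisely the statement $\linearthres(C_{2k+1})\le 1/4$, which is the special case $H=C_{2k+1}$ of what you are trying to prove; there is no shortcut here analogous to the $K_3$ case. The paper's route is entirely different: it runs a dichotomy on whether $G$ contains $\varepsilon^{c}n^2$ edge-disjoint copies of some $C_{2\ell+1}$ with $\ell\le k$. If yes, a calibrated choice of $c$ lets the polynomial losses in the blow-up lemma cancel, giving $\Omega(\varepsilon n^h)$ copies of $H$. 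If no, one deletes $O(\varepsilon^c n^2)$ edges to obtain a $\{C_3,\dots,C_{2k+1}\}$-free subgraph $G'$ of minimum degree $>n/4$, and then invokes the Letzter--Snyder structure theorem: such a $G'$ is homomorphic to $C_7$ (and bipartite if $k\ge 3$). The $H$-copies in $G$ must then use edges violating this $C_7$-structure, and a case analysis (Lemmas~5.2 and~5.3 applied to these ``bad'' edges) yields the linear count. Your proposal has no analogue of this structural step, and without it the argument does not go through.

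For the lower bounds in cases (i) and (iii), Behrend-type constructions are unnecessary and obscure the real mechanism. The paper simply inserts $\varepsilon n$-regular (bipartite) graphs into an otherwise $(r-1)$-partite template. The point is structural: in case (i), the absence of a critical edge forces every $H$-copy to use at least \emph{two} of the sparse edges, immediately giving $O(\varepsilon^2 n^h)$ total copies; in case (iii), a similar $2$-colouring argument shows every $H$-copy must use one sparse edge from each of two sparse bipartite classes, again yielding $O(\varepsilon^2 n^h)$. This is where the work lies, not in the arithmetic structure of the inserted edges.
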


Theorems \ref{thm:poly threshold of odd cycle} and \ref{thm:linear threshold} show a separation between the polynomial and linear removal thresholds, giving a sequence of graphs (i.e. $C_5,C_7,\dots$) where the polynomial threshold tends to $0$ while the linear threshold is constant $\frac{1}{4}$.

The parameters $\polythres$ and $\linearthres$ are related to two other well-studied minimum degree thresholds: the chromatic threshold and the homomorphism threshold. The chromatic threshold of a graph $H$ is the infimum $\gamma$ such that every $n$-vertex $H$-free graph $G$ with $\delta(G) \geq \gamma n$ has bounded cromatic number, i.e., 
there exists $C = C(\gamma)$ such that $\chi(G) \leq C$. The study of the chromatic threshold originates in the work of Erd\H{o}s and Simonovits \cite{ES73} from the '70s. Following multiple works \cite{AES74,Haggkvist82,Jin95,CJK97,Brandt99,Thomassen02,Thomassen07,BT10,BT11,GL11,Lyle11}, the chromatic threshold of every graph was determined by Allen et al. \cite{ABGKM13}. 

Moving on to the homomorphism threshold, we define it more generally for families of graphs. The {\em homomorphism threshold} of a graph-family $\mathcal{H}$, denoted $\delta_{\text {hom}}(\mathcal{H})$, is the infimum $\gamma$ for which there exists an $\mathcal{H}$-free graph $F = F(\gamma)$ such that 
every $n$-vertex $\mathcal{H}$-free graph $G$ with $\delta(G) \geq \gamma n$ is homomorphic to $F$. When $\mathcal{H} = \{H\}$, we write $\delta_{\text {hom}}(H)$. This parameter was widely studied in recent years \cite{luczak2006structure,OS20,LS19,ES20,Sankar}. It turns out that $\delta_{\text {hom}}$ is closely related to $\polythres(H)$, as the following theorem shows. 
For a graph $H$, let $\mathcal{I}_H$ denote the set of all minimal (with respect to inclusion) graphs $H'$ such that $H$ is homomorphic to $H'$. 
\begin{theorem}\label{thm:poly-hom}
For every graph $H$, $\polythres(H) \leq \delta_{\text{hom}}(\mathcal{I}_H)$.   
\end{theorem}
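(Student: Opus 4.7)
Fix $\gamma > \delta_{\text{hom}}(\mathcal{I}_H)$ and let $F$ be the finite $\mathcal{I}_H$-free graph provided by the hypothesis, so that every $\mathcal{I}_H$-free graph of minimum degree at least $\gamma n$ admits a homomorphism into $F$. A key preliminary observation is that $\mathcal{I}_H$-freeness of $G$ is equivalent to the nonexistence of a homomorphism $H\to G$: given $\phi\colon H\to G$, the image $\phi(H)\subseteq G$ satisfies $H\to\phi(H)$ and, by minimality, contains a member of $\mathcal{I}_H$. In particular, $H\not\to F$.

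The plan is as follows. Given an $n$-vertex $G$ with $\delta(G)\ge\gamma n$ and $\varepsilon n^2$ edge-disjoint copies of $H$, apply Szemer\'edi's regularity lemma with parameters $\eta,d$ depending only on $\gamma-\delta_{\text{hom}}(\mathcal{I}_H)$, and after standard cleaning extract a reduced graph $R'$ with $\delta(R')\ge\gamma'|V(R')|$ for some $\gamma' > \delta_{\text{hom}}(\mathcal{I}_H)$. Note that all regularity constants (including the number of parts $M$) depend only on $\gamma$ and $H$, not on $\varepsilon$.

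The main dichotomy is whether $R'$ is $\mathcal{I}_H$-free. If $R'$ contains some $H'\in\mathcal{I}_H$ as a subgraph, then since $H\to H'\subseteq R'$, the standard counting lemma along this homomorphism yields $\Omega_\gamma(n^{v(H)})$ copies of $H$ in $G$, giving the polynomial bound with a constant $\mu_0(\gamma)$. Otherwise, $R'\to F$ via some $\pi\colon V(R')\to V(F)$, and this lifts to a partition of $V(G)$ into super-parts $\{W_a\}_{a\in V(F)}$. Call an edge of $G$ \emph{compatible} if it lies in a pair $(V_i,V_j)$ that is an edge of $R'$ (which forces $\pi(i)\pi(j)\in E(F)$ since $\pi$ is a homomorphism), and \emph{incompatible} otherwise. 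Because $H\not\to F$, every $H$-copy in $G$ uses at least one incompatible edge; edge-disjointness then provides at least $\varepsilon n^2$ distinct incompatible edges, while the regularity setup caps their total by $c_0(\gamma)\cdot n^2$, forcing $\varepsilon\le c_0(\gamma)$.

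It remains to produce $\poly(\varepsilon)\cdot n^{v(H)}$ copies of $H$ in this $F$-structured regime. The idea is to use the $F$-blow-up structure as an amplifier: each incompatible edge extends, via compatible-edge paths tracing a walk in $F$ of appropriate length and parity, to many further copies of $H$ in $G$. The principal technical obstacle is precisely this last step; the exponent $1/\mu$ is controlled by the structural complexity of $F$ and by how many $F$-walks are needed to realize the requisite completions, and one must leverage the densities guaranteed by regularity together with the minimum-degree condition to ensure that a large enough fraction of the incompatible edges admit such completions, and that the resulting exponent depends only on $\gamma$ and $H$.
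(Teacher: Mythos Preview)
Your setup and dichotomy are correct, but the proof has a genuine gap at precisely the point you flag as ``the principal technical obstacle.'' You do not actually carry out the second branch: producing $\poly(\varepsilon)\cdot n^{v(H)}$ copies of $H$ in the $F$-structured regime. This is not a routine completion. Knowing that each $H$-copy uses an incompatible edge tells you there are $\varepsilon n^2$ incompatible edges, but not that a typical incompatible edge extends to many (or any) $H$-copies; such an edge may sit inside a single cluster, or in an irregular or sparse pair, with no usable structure around it. The ``amplifier via $F$-walks'' idea is not made precise, and for a general $H$ and an unknown $F$ there is no evident mechanism to realise it with an exponent depending only on $\gamma$ and $H$. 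The underlying problem is structural: because your regularity parameters are fixed independently of $\varepsilon$, the only quantitative handle you retain in this branch is the raw count of incompatible edges, and converting that count into a polynomial count of $H$-copies is essentially the whole theorem. (Choosing the regularity parameters to depend on $\varepsilon$ would make the number of parts tower-type in $1/\varepsilon$, destroying any polynomial bound.)

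The paper avoids this difficulty by a different route. Instead of regularity, it invokes a property-testing result of Nakar and Ron: if $G$ is $\varepsilon$-far from being homomorphic to $F$, then a uniform sample $X$ of $q=\poly_F(1/\varepsilon)$ vertices satisfies $G[X]\not\to F$ with probability at least $2/3$. Since $G$ is $\varepsilon$-far from $H$-free and $F$ is $\mathcal{I}_H$-free (so $F$-homomorphic implies $H$-free), $G$ is $\varepsilon$-far from being homomorphic to $F$. The minimum-degree condition transfers to the random sample by Chernoff, so with constant probability $G[X]$ has $\delta(G[X])>(\delta_{\text{hom}}(\mathcal{I}_H)+\alpha/2)|X|$ and $G[X]\not\to F$; by the very definition of $\delta_{\text{hom}}$ this forces $G[X]$ to contain some $H'\in\mathcal{I}_H$. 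A first-moment comparison then yields at least $(n/q)^{v(H')}/\poly(1)=\poly(\varepsilon)\cdot n^{v(H')}$ copies of $H'$ in $G$, and the blow-up counting lemma upgrades this to $\poly(\varepsilon)\cdot n^{v(H)}$ copies of $H$. The polynomial dependence on $\varepsilon$ enters cleanly through the sample size $q=\poly(1/\varepsilon)$, which is exactly the leverage your regularity approach is missing.
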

Note that $\mathcal{I}_{C_{2k+1}} = \{C_3,\dots,C_{2k+1}\}$. Using this, the upper bound in \Cref{thm:poly threshold of odd cycle} follows immediately by combining \Cref{thm:poly-hom} with the result of Ebsen and Schacht \cite{ES20} that $\delta_{\text{hom}}(\{C_3,\dots,C_{2k+1}\}) = \frac{1}{2k+1}$. The lower bound in \Cref{thm:poly threshold of odd cycle} was established in \cite{fox2021minimum}; for completeness, we sketch the proof in \nolinebreak \Cref{sec:lower bounds}. 

\medskip
The rest of this short paper is organized as follows. \Cref{sec:prelim} contains some preliminary lemmas. In \Cref{sec:lower bounds} we prove the lower bounds in Theorems~\ref{thm:poly threshold of odd cycle} and~\ref{thm:linear threshold}. 
\Cref{sec:poly} gives the proof of \Cref{thm:poly-hom}, and \Cref{sec:linear} gives the proof of the upper bounds in \Cref{thm:linear threshold}. In the last section we discuss further related problems.

\section{Preliminaries}\label{sec:prelim}
Throughout this paper, we always consider {\em labeled copies of some fixed graph $H$} and write {\em copy of $H$} for simplicity.
We use $\delta(G)$ for the minimum degree of $G$, and write $H \rightarrow F$ to denote that there is a homomorphism from $H$ to $F$. 
For a graph $H$ on $[h]$ and integers $s_1, s_2, \dots, s_h > 0$, we denote by $H[s_1,\dots,s_h]$ the blow-up of $H$ where each vertex $i \in V(H)$ is replaced by a set $S_i$ of size $s_i$ (and edges are replaced with complete bipartite graphs). 
The following lemma is standard.
\begin{lemma} \label{lem:blowup-count}
  Let $H$ be a fixed graph on vertex set $[h]$ and let $s_1, s_2, \dots, s_h \in \N$. 
  There exists a  constant $c = c(H, s_1,\dots,s_h) > 0$ such that the following holds.
  Let $G$ be an $n$-vertex graph and $V_1,\dots,V_h \subseteq V(G)$.
  Suppose that $G$ contains at least $\rho n^h$ copies of $H$ mapping $i$ to $V_i$ for all $i \in [h]$. Then $G$ contains at least $c\rho^{\frac{1}{c}} \cdot n^{s_1 + \dots + s_h}$ copies of $H[s_1,\dots,s_h]$ mapping $S_i$ to $V_i$ for all $i \in [h]$.
\end{lemma}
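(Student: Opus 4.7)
The plan is to blow up the vertices of $H$ one at a time, applying Jensen's inequality (equivalently, the power-mean inequality) at each step to control how the partite homomorphism count evolves. First I would reduce to counting partite homomorphisms instead of injective copies: letting $W$ denote the adjacency indicator of $G$, the number of copies of $H$ mapping $i$ to $V_i$ equals $\sum_{v_i \in V_i} \prod_{ij \in E(H)} W(v_i, v_j)$ up to an additive $O(n^{h-1})$ error coming from non-injective tuples, and the analogous identity holds for $H[s_1,\ldots,s_h]$. Hence it suffices to prove the homomorphism version of the lemma.

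Then I would induct on $i \geq 0$, showing that the partite homomorphism count from $H[s_1,\ldots,s_i,1,\ldots,1]$ to $G$ is at least $\rho^{K_i} n^{s_1+\cdots+s_i + (h - i)}$, where $K_0 = 1$ and $K_i = s_i \cdot K_{i-1}$ for $i \geq 1$. For the inductive step, write the count at stage $i$ as $N^{(i)} = \sum_{\vec{y}} g(\vec{y}) \bigl( \sum_{v_{i+1} \in V_{i+1}} h(\vec{y}, v_{i+1}) \bigr)$, where $\vec{y}$ ranges over the vertex choices for all coordinates except $i+1$, the factor $g$ is the product of edge-indicators not touching coordinate $i+1$, and $h$ collects those that do. After blowing up coordinate $i+1$ to $s_{i+1}$ copies, the count becomes $N^{(i+1)} = \sum_{\vec{y}} g(\vec{y}) \bigl( \sum_{v_{i+1}} h(\vec{y}, v_{i+1}) \bigr)^{s_{i+1}}$. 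Applying Jensen to the convex function $x \mapsto x^{s_{i+1}}$ under the probability measure proportional to $g$ gives $N^{(i+1)} \geq (N^{(i)})^{s_{i+1}} / Z^{s_{i+1}-1}$, where $Z = \sum_{\vec{y}} g(\vec{y}) \leq n^{s_1 + \cdots + s_i + (h-i-1)}$; plugging in the inductive hypothesis and simplifying the exponent of $n$ yields the claim at step $i+1$.

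After $h$ iterations the homomorphism count from $H[s_1,\ldots,s_h]$ into $G$ with the correct partite structure is at least $\rho^{K_h} n^{s_1+\cdots+s_h}$, which has the required form $c\rho^{1/c} n^{s_1+\cdots+s_h}$ with $c = 1/K_h = 1/\prod_i s_i$. The copy count differs from the homomorphism count by $O(n^{\sum s_i - 1})$ (the contribution of non-injective embeddings), which is absorbed by the main term once $n$ is large enough in terms of $\rho$ — a threshold that can itself be absorbed into a final reduction of $c$.

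There is no conceptual obstacle here: the lemma is a standard application of Jensen's inequality, closely related to Kruskal-Katona-type blow-up supersaturation arguments. The only care needed is the exponent bookkeeping in the iteration and the routine handling of degenerate homomorphisms.
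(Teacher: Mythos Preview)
Your argument is correct and complete: the iterated Jensen (power-mean) step is set up properly, the exponent arithmetic checks out, and the passage between homomorphisms and injective copies is handled in the standard way.

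However, this is not the route the paper takes. The paper encodes each admissible copy of $H$ (i.e.\ mapping vertex $i$ into $V_i$) as a hyperedge of an auxiliary $h$-uniform hypergraph $\mathcal{G}$ on $V(G)$, so that $\mathcal{G}$ has at least $\rho n^h$ edges; it then invokes Erd\H{o}s's hypergraph generalization of the K\H{o}v\'ari--S\'os--Tur\'an theorem to conclude that $\mathcal{G}$ contains $\poly(\rho)\,n^{s_1+\dots+s_h}$ copies of the complete $h$-partite $h$-graph $K^{(h)}_{s_1,\dots,s_h}$, each of which yields a copy of $H[s_1,\dots,s_h]$ with the required placement. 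So the paper's proof is a one-line reduction to a black-box supersaturation result, while you prove that supersaturation inline: Erd\H{o}s's theorem is itself established by exactly the coordinate-by-coordinate convexity argument you wrote down. Your approach is more self-contained and yields the explicit exponent $1/c = \prod_i s_i$; the paper's is shorter to state but relies on an external citation. Both incur the same routine loss when converting homomorphism counts to injective copies.
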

Note that the sets $V_1,\dots,V_h$ in \Cref{lem:blowup-count} do not have to be disjoint. 
The proof of \Cref{lem:blowup-count} works by defining an auxiliary $h$-uniform hypergraph $\mathcal{G}$ whose hyperedges correspond to the copies of $H$ in which vertex $i$ is mapped to $V_i$. By assumption, $\mathcal{G}$ has at least $\rho n^h$ edges. By the hypergraph generalization of the Kov\"ari-S\'os-Tur\'an theorem, see \cite{erdos1964extremal}, $\mathcal{G}$ contains $\poly(\rho)n^{s_1 + \dots + s_h}$ copies of $K^{(h)}_{s_1,\dots,s_h}$, the complete $h$-partite hypergraph with parts of size $s_1,\dots,s_h$. Each copy of $K^{(h)}_{s_1,\dots,s_h}$ gives a copy of $H[s_1,\dots,s_h]$ mapping $S_i$ \nolinebreak to \nolinebreak $V_i$.

Fox and Wigderson \cite[Proposition 4.1]{fox2021minimum} proved the following useful fact. 
\begin{lemma}
\label{lem:core}
    If $H \rightarrow F$ and $F$ is a subgraph of $H$, then $\polythres(H) = \polythres(F)$.
\end{lemma}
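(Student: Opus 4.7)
The plan is to establish both inequalities $\polythres(H) \leq \polythres(F)$ and $\polythres(F) \leq \polythres(H)$, each time exploiting the interplay between the inclusion $F \subseteq H$ and the homomorphism $H \to F$ via \Cref{lem:blowup-count}. Fix a homomorphism $\phi : H \to F$ with fibers of sizes $s_i = |\phi^{-1}(i)|$ for $i \in V(F)$, so that after a canonical identification of $V(H)$ with $\bigcup_i \phi^{-1}(i)$, $H$ is a subgraph of the blow-up $F[s_1,\ldots,s_{v(F)}]$; let $\iota : V(F) \hookrightarrow V(H)$ denote the subgraph inclusion.

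For the direction $\polythres(H) \leq \polythres(F)$, fix $\gamma > \polythres(F)$ and let $G$ be an $n$-vertex graph with $\delta(G) \geq \gamma n$ and $\varepsilon n^2$ edge-disjoint copies of $H$. Each $H$-copy contains an $F$-copy through $\iota$, and edge-disjointness of the $H$-copies forces the resulting $F$-copies to also be edge-disjoint, so polynomial removal for $F$ at threshold $\gamma$ produces $\poly(\varepsilon)\,n^{v(F)}$ labeled $F$-copies in $G$. Applying \Cref{lem:blowup-count} with $V_i = V(G)$ upgrades these to $\poly(\varepsilon)\,n^{v(H)}$ labeled copies of $F[s_1,\ldots,s_{v(F)}]$ in $G$, and each such blow-up copy carries a labeled $H$-copy via the canonical identification, as required.

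The direction $\polythres(F) \leq \polythres(H)$ is the more interesting one: given $\gamma > \polythres(H)$ and $G$ with $\delta(G) \geq \gamma n$ and $\varepsilon n^2$ edge-disjoint $F$-copies, the plan is to pass to the $k$-fold blow-up $G[k]$ with a constant $k \geq \max_i s_i$. Since $\delta(G[k]) = k \cdot \delta(G) \geq \gamma \cdot |V(G[k])|$ and each $F$-copy in $G$ induces an $F[k,\dots,k]$ in $G[k]$ that contains at least one copy of $H$ (because $H \subseteq F[s_1,\ldots,s_{v(F)}] \subseteq F[k,\dots,k]$), the $\varepsilon n^2$ edge-disjoint $F$-copies in $G$ blow up to $(\varepsilon/k^2)\,|V(G[k])|^2$ edge-disjoint $H$-copies in $G[k]$. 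Polynomial removal for $H$ applied to $G[k]$ then yields $\poly(\varepsilon)\,|V(G[k])|^{v(H)}$ labeled $H$-copies in $G[k]$, and each such copy $f : V(H) \to V(G[k])$ projects under the natural map $\pi : V(G[k]) \to V(G)$ to a homomorphism $g := \pi \circ f \circ \iota : V(F) \to V(G)$.

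The main obstacle is executing this projection-and-lifting quantitatively. The two key estimates I expect to need are: (i) the number of $H$-copies in $G[k]$ for which $g$ fails to be injective is only $O(n^{v(H)-1})$, because forcing two vertices in $f(\iota(V(F)))$ into a common $\pi$-fiber costs a factor of $n$ in the count and so is negligible against the $\poly(\varepsilon)\,n^{v(H)}$ total once $n$ is large; and (ii) each labeled $F$-copy $g$ in $G$ arises as the image of at most $k^{v(H)} \cdot n^{v(H)-v(F)}$ labeled $H$-copies in $G[k]$, since one may freely choose the $[k]$-coordinate of each $f(\iota(i))$ and the position of each of the $v(H)-v(F)$ remaining vertices of $V(H)$. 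Dividing the good $H$-copy count by this lifting multiplicity then gives $N(F,G) \geq \poly(\varepsilon)\,n^{v(F)}$, completing the argument.
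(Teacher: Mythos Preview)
Your proof is correct. The paper itself does not give a proof of this lemma; it simply cites \cite[Proposition 4.1]{fox2021minimum}. Your argument is the natural one, and both directions are set up correctly.

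A few remarks on the details. In the first direction everything is routine: edge-disjoint $H$-copies restrict to edge-disjoint $F$-copies via $\iota$, polynomial removal for $F$ gives $\poly(\varepsilon)\,n^{v(F)}$ copies, and \Cref{lem:blowup-count} upgrades these to the blow-up $F[s_1,\dots,s_{v(F)}]\supseteq H$.

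In the second direction the blow-up trick is indeed the right move, since ``$\varepsilon$-far from $F$-free'' does not in general imply ``$\varepsilon$-far from $H$-free'' in $G$ itself. Your two estimates are correct:
for (ii), fixing $g$ determines $f(\iota(i))$ up to a factor $k$ for each $i\in V(F)$ and leaves at most $(kn)^{v(H)-v(F)}$ choices for the remaining vertices, giving the bound $k^{v(H)} n^{v(H)-v(F)}$;
for (i), any collision $\pi(f(\iota(i)))=\pi(f(\iota(j)))$ confines $f(\iota(i)),f(\iota(j))$ to a common fiber of size $k$, so the count of such $f$ is $O_k(n^{v(H)-1})$. The only thing to make explicit is the regime $n \leq \poly(1/\varepsilon)$, where the $O(n^{v(H)-1})$ term is not automatically negligible: there you can simply use the $\varepsilon n^2$ edge-disjoint $F$-copies directly, since $\varepsilon n^2 \geq \mu' \varepsilon^{1/\mu'} n^{v(F)}$ for a suitably small $\mu'$ once $n$ is polynomially bounded in $1/\varepsilon$. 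With that standard case split, the argument is complete.
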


The following lemma is an asymmetric removal-type statement for odd cycles, which gives polynomial bounds. 
It may be of independent interest. 
A similar result has appeared very recently in \cite{GSV}.
\begin{lemma} \label{lem:edge disjoint short cycle implies many Ck}
  For $1 \leq \ell < k$, there exists a constant $c = c(k) > 0$ such that if an $n$-vertex graph $G$ has $\varepsilon n^2$ edge-disjoint copies of $C_{2\ell+1}$, then it has at least $c\varepsilon^{1/c} n^{2k+1}$ copies of $C_{2k+1}$. 
\end{lemma}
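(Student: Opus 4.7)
The plan is to find many labeled copies of $C_{2k+1}$ in $G$ by embedding them inside blow-ups of $C_{2\ell+1}$. The key observation is that $F := C_{2\ell+1}[k-\ell+1, k-\ell+1, 1, \ldots, 1]$ (obtained by expanding two adjacent vertices of $C_{2\ell+1}$ into sets of size $k-\ell+1$ while keeping the other $2\ell-1$ vertices as singletons) has exactly $2(k-\ell+1) + (2\ell-1) = 2k+1$ vertices and contains $C_{2k+1}$ as a Hamiltonian cycle. Indeed, letting $S_1 = \{a_1, \ldots, a_{k-\ell+1}\}$ and $S_2 = \{b_1, \ldots, b_{k-\ell+1}\}$ be the expanded parts and $v_3, \ldots, v_{2\ell+1}$ the singletons in cyclic order, the sequence $a_1, b_1, a_2, b_2, \ldots, a_{k-\ell+1}, b_{k-\ell+1}, v_3, v_4, \ldots, v_{2\ell+1}, a_1$ traces out a $C_{2k+1}$ in $F$ using $2(k-\ell)+1$ alternating edges between $S_1$ and $S_2$, then $2\ell-1$ edges traversing $b_{k-\ell+1} \to v_3 \to \cdots \to v_{2\ell+1}$, and one closing edge, for $2k+1$ edges total.

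From the $\varepsilon n^2$ edge-disjoint copies of $C_{2\ell+1}$ in $G$, I would extract at least $2(2\ell+1)\varepsilon n^2$ labeled copies of $C_{2\ell+1}$ and then apply \Cref{lem:blowup-count} with $H = C_{2\ell+1}$, part sizes $s_1 = s_2 = k-\ell+1$ and $s_3 = \cdots = s_{2\ell+1} = 1$, and each $V_i = V(G)$, to obtain at least $c\varepsilon^{1/c} n^{2k+1}$ labeled copies of $F$ in $G$ for some constant $c = c(k) > 0$. Each such labeled copy of $F$ would then yield a labeled $C_{2k+1}$ via the Hamiltonian embedding above; this correspondence is injective because the Hamiltonian cycle uses all $2k+1$ vertices of $F$, so the $F$-labels can be recovered uniquely from the cycle labels. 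Thus $G$ would contain at least $c\varepsilon^{1/c} n^{2k+1}$ labeled copies of $C_{2k+1}$.

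The hard part will be the careful application of \Cref{lem:blowup-count}, since the density of labeled $C_{2\ell+1}$-copies in $G$ guaranteed by the edge-disjoint assumption is only $\Omega(\varepsilon n^{-(2\ell-1)})$, which decays with $n$; a naive application of the blow-up count would lose a factor of $n^{(2\ell-1)/c}$ in the exponent. Overcoming this loss, perhaps by first localizing the $C_{2\ell+1}$-copies in a denser substructure of $G$ or by invoking a sharper form of the blow-up count tailored to the cycle structure (as in the related result of \cite{GSV}), is the principal technical obstacle.
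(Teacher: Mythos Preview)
Your structural observation is correct: $C_{2k+1}$ embeds as a Hamiltonian cycle in $C_{2\ell+1}[k-\ell+1,k-\ell+1,1,\dots,1]$, so many copies of this blow-up would indeed give many copies of $C_{2k+1}$. You also correctly spot the obstacle: from $\varepsilon n^2$ edge-disjoint copies of $C_{2\ell+1}$ one only gets $\rho n^{2\ell+1}$ labeled copies with $\rho = \Theta(\varepsilon n^{-(2\ell-1)})$, and \Cref{lem:blowup-count} then yields nothing useful.

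However, this is not a technicality to be patched but a genuine barrier for this route. Boosting $\varepsilon n^2$ edge-disjoint copies of $C_{2\ell+1}$ to $\poly(\varepsilon)\, n^{2\ell+1}$ total copies is precisely a polynomial removal lemma for $C_{2\ell+1}$, which is false for every $\ell\ge 1$ (see \cite{alon2002testing}). Already for $\ell=1$, the Ruzsa--Szemer\'edi graph has $\varepsilon n^2$ edge-disjoint triangles yet only $\varepsilon n^2$ triangles in total, since each edge lies in exactly one. This also defeats the ``localization'' idea: in such a graph every subgraph on $n'$ vertices contains at most $\binom{n'}{2}$ triangles, so its triangle density is $O(1/n')$, never $\poly(\varepsilon)$ uniformly in $n'$. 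Even a hypothetically linear-in-$\rho$ blow-up count would give only $O(\varepsilon n^{2(k-\ell)+2})$ copies, far short of $\poly(\varepsilon)n^{2k+1}$. In short, any plan that factors through the $C_{2\ell+1}$-count cannot succeed.

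The paper's proof avoids counting $C_{2\ell+1}$'s altogether. It first cleans so that every remaining vertex lies in at least $\varepsilon n$ of the edge-disjoint cycles, hence has degree at least $2\varepsilon n$. Fixing a vertex $v_0$, it considers the $\Omega(\varepsilon^2 n^2)$ edge-disjoint cycles that meet $N(v_0)$ but avoid $v_0$, and uses a random partition $V_0,\dots,V_\ell$ to ``layer'' them; after a second cleaning one obtains a set $W$ in which $W\cap V_\ell$ has minimum degree $\poly(\varepsilon)n$ \emph{inside itself}, each vertex of $W\cap V_i$ has $\poly(\varepsilon)n$ neighbours in $W\cap V_{i-1}$, and $W\cap V_0\subseteq N(v_0)$. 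A path of length $2k-2\ell-1$ is then built greedily inside $W\cap V_\ell$ and extended down the layers to two endpoints in $N(v_0)$, closing a $C_{2k+1}$ through $v_0$. The point is that edge-disjointness is converted into minimum-degree conditions, from which long paths are counted directly; positive $C_{2\ell+1}$-density is never needed.
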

	\begin{proof}
		Let $\mathcal{C}$ be a collection of $\varepsilon n^2$ edge-disjoint copies of $C_{2\ell+1}$ in $G$. 
		There exists a collection $\mathcal{C}' \subseteq \mathcal{C}$ such that $|\mathcal{C}'| \geq \varepsilon n^2/2$ and each vertex $v \in V(G)$ belongs to either $0$ or at least $\varepsilon n/2$ of the cycles in $\mathcal{C}'$. Indeed, to obtain $\mathcal{C}'$, we repeatedly delete from $\mathcal{C}$ all cycles containing a vertex $v$ which belongs to at least one but less than $\varepsilon n/2$ of the cycles in $\mathcal{C}$ (without changing the graph). The set of cycles left at the end is $\mathcal{C}'$. In this process, we delete at most $\varepsilon n^2/2$ cycles altogether (because the process lasts for at most $n$ steps); hence $|\mathcal{C}'| \geq \varepsilon n^2/2$. 
		Let $V$ be the set of vertices contained in at least $\varepsilon n/2$ cycles from $\mathcal{C}'$, so $|V| \geq \varepsilon n/2$. With a slight abuse of notation, we may replace $G$ with $G[V]$, $\mathcal{C}$ with $\mathcal{C}'$ and $\varepsilon/2$ with $\varepsilon$, and denote $|V|$ by $n$. Hence, from now on, we assume that each vertex $v \in V(G)$ is contained in at least $\varepsilon n$ of the cycles in $\mathcal{C}$. 
		This implies that $|N(v)| \geq 2\varepsilon n$ for every $v \in V(G)$. 
		
		Fix any $v_0 \in V(G)$ and let $\mathcal{C}(v_0)$ be the set of cycles $C \in \mathcal{C}$ such that $C \cap N(v_0) \neq \emptyset$ and $v_0 \notin \mathcal{C}$. The number of cycles $C \in \mathcal{C}$ intersecting $N(v_0)$ is at least $|N(v_0)| \cdot \varepsilon n/(2\ell+1) \geq 2\varepsilon^2 n^2/(2\ell+1)$, and the number of cycles containing $v_0$ is at most $n$. Hence, $|\mathcal{C}(v_0)| \geq 2\varepsilon^2 n^2/(2\ell+1) - n \geq \varepsilon^2 n^2/(\ell+1)$. 
		Take a random partition $V_0,V_1,\dots,V_{\ell}$ of $V(G) \setminus \{v_0\}$, where each vertex is put in one of the parts uniformly and independently. For a cycle $(x_1,\dots,x_{2\ell+1}) \in \mathcal{C}(v_0)$ with $x_{\ell+1} \in N(v_0)$, say that $(x_1,\dots,x_{2\ell+1})$ is good if $x_{\ell+1} \in V_0$ and $x_{\ell+1-i},x_{\ell+1+i} \in V_{i}$ for $1 \leq i \leq \ell$ (so in particular $x_1,x_{2\ell+1} \in V_{\ell}$). 
		The probability that $(x_1,\dots,x_{2\ell+1})$ is good is $1/(\ell+1)^{2\ell+1}$, so there is a collection of good cycles $\mathcal{C}'(v_0) \subseteq \mathcal{C}_0$ of size $|\mathcal{C}'(v_0)| \geq |\mathcal{C}(v_0)|/(\ell+1)^{2\ell+1} \geq \varepsilon^2 n^2/(\ell+1)^{2\ell+2}$. Put $\gamma := \varepsilon^2/(\ell+1)^{2\ell+2}$. 
		By the same argument as above, there is a collection $\mathcal{C}''(v_0) \subseteq \mathcal{C}'(v_0)$ with $|\mathcal{C}''(v_0)| \geq \gamma n^2/2$ such that each vertex is contained in either $0$ or at least $\gamma n/2$ cycles from $\mathcal{C}''(v_0)$. Let $W$ be the set of vertices contained in at least $\gamma n/2$ cycles from $\mathcal{C}''(v_0)$. 
  Note that $W \cap V_0 \subseteq N(v_0)$ by definition.
  Also, each vertex in $W \cap V_{\ell}$ has at least $\gamma n/2$ neighbors in $W \cap V_{\ell}$, and for each $1 \leq i \leq \ell$, each vertex in $W \cap V_i$ has at least $\gamma n/2$ neighbors in $W \cap V_{i-1}$.
    It follows that $W \cap V_{\ell}$ contains at least 
    $\frac{1}{2}|W \cap V_{\ell}| \cdot \prod_{i=0}^{2k-2\ell-2}(\gamma n/2 - i) = \poly(\gamma) n^{2k - 2\ell}$ paths of length $2k - 2\ell - 1$. 
	We now construct a collection of copies of $C_{2k+1}$ as follows. 
    Choose a path $y_{\ell+1},y_{\ell+2},\dots,y_{2k-\ell}$ of length $2k-2\ell-1$ in $W \cap V_{\ell}$. 
    For each $i = \ell,\dots,1$, take a neighbor $y_{i} \in W \cap V_{i-1}$ of $y_{i+1}$ and a neighbor $y_{2k-i+1} \in W \cap V_{i-1}$ of $y_{2k-i}$, such that the vertices $y_1,\dots,y_{2k}$ are all different. 
    Then $y_1,\dots,y_{2k}$ is a path and $y_1,y_{2k} \in W \cap V_0 \subseteq N(v_0)$, so $v_0,y_1,\dots,y_{2k}$ is a copy of $C_{2\ell+1}$. 
    The number of choices for the path $y_{\ell+1},y_{\ell+2},\dots,y_{2k-\ell}$ is $\poly(\gamma) n^{2k - 2\ell}$ and the number of choices for each vertex $y_i,y_{2k-i+1} \in V_{i-1}$ ($i = \ell,\dots,1$) is at least $\gamma n/2$. Hence, the total number of choices for $y_1,\dots,y_{2k}$ is $\poly(\gamma)n^{2k}$. As there are $n$ choices for $v_0$, we get a total of $\poly(\gamma)n^{2k+1} = \poly_k(\varepsilon)n^{2k+1}$ copies of $C_{2k+1}$, as required. 	\end{proof}

\section{Lower bounds}\label{sec:lower bounds}
Here we prove the lower bounds in Theorems~\ref{thm:poly threshold of odd cycle} and \ref{thm:linear threshold}. The lower bound in \Cref{thm:poly threshold of odd cycle} was proved in \cite[Theorem 4.3]{fox2021minimum}. For completeness, we include a sketch of the proof:
\begin{lemma}\label{lem:C_{2k+1} construction}
    $\polythres(C_{2k+1}) \geq \frac{1}{2k+1}$.
\end{lemma}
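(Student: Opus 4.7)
The plan is to construct, for each $\gamma<\frac{1}{2k+1}$, a family of $n$-vertex graphs $G_n$ with $\delta(G_n)\ge\gamma n$ in which the copies of $C_{2k+1}$ exhibit a super-polynomial gap between the edge-disjoint count and the total count. The approach follows the Behrend-type template of \cite[Theorem~4.3]{fox2021minimum}.

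The starting point is a Behrend-type set: for any large prime $N$, one can produce $S\subseteq\mathbb{Z}/N\mathbb{Z}$ of size $|S|\ge N\cdot\exp(-O(\sqrt{\log N}))$ in which every solution to $s_0+s_1+\cdots+s_{2k-1}\equiv 2ks'\pmod N$ with all variables in $S$ is trivial, that is, $s_0=\cdots=s_{2k-1}=s'$; this comes from intersecting a thin spherical shell in $\mathbb{Z}^m$ with a box, generalizing Behrend's original argument. Using such $S$, I would define a base graph $G_0$ on $(2k+1)N$ vertices partitioned into parts $A_0,\dots,A_{2k}$ (each identified with $\mathbb{Z}/N\mathbb{Z}$), joining $x\in A_i$ to $y\in A_{i+1}$ iff $y-x\in S$ (for $i<2k$) and $x\in A_{2k}$ to $y\in A_0$ iff $y-x\in 2kS$. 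Tracing the increments around any $C_{2k+1}$ in $G_0$ yields a solution of the above equation, which must therefore be trivial: every copy is an arithmetic cycle $(v,v+s,v+2s,\dots,v+2ks)$. Hence $G_0$ has exactly $N|S|$ copies of $C_{2k+1}$, pairwise edge-disjoint, giving $\varepsilon=\Theta(|S|/N)$ and $\delta=\Theta(|S|/N^{2k})$; Behrend's bound then yields $\log(1/\delta)=\Theta((\log(1/\varepsilon))^2)$, certifying super-polynomial dependence.

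The main obstacle is that $G_0$ has minimum degree only $\Theta(|S|)=o(n)$, far below the required $\gamma n$. To lift the minimum degree, I would augment $G_0$ with a carefully chosen auxiliary graph $H$ on the same vertex set. The design would ensure that $H$ is itself $C_{2k+1}$-free (for instance by taking $H$ to be bipartite under a suitable partition of the parts), so that every new $C_{2k+1}$ in $G_0\cup H$ is forced to use at least one Behrend edge of $G_0$. The bipartition and density of $H$ would be tuned so that the minimum degree of $G_0\cup H$ reaches $\gamma n$, while the total count of $C_{2k+1}$'s grows by at most a polynomial factor $N^c$ whose exponent is small enough that the super-polynomial ratio $\log(1/\delta)/\log(1/\varepsilon)\to\infty$ is preserved. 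Verifying these three invariants simultaneously---minimum degree at least $\gamma n$, edge-disjoint count at least $\Omega(N|S|)$, and total count bounded appropriately---is the main technical work, and parallels the clique case in \cite{fox2021minimum}.
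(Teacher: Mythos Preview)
Your proposal has a genuine gap in the augmentation step. You propose to add a bipartite graph $H$ on the \emph{same} vertex set as $G_0$ to raise the minimum degree, asserting that the $C_{2k+1}$-count then grows only by a controllable factor $N^c$. This is where the argument breaks. Reaching minimum degree close to $N=n/(2k+1)$ forces $H$ itself to have minimum degree essentially $N$, and then a single $G_0$-edge together with a path of length $2k$ inside $H$ already yields on the order of $N^{2k-1}$ copies of $C_{2k+1}$ through that edge. Concretely, for $H=K_{A_0,\,A_1\cup\cdots\cup A_{2k}}$ (or any complete bipartite graph whose sides are unions of the $A_i$), the total count becomes $\Theta(|S|\,N^{2k})$, i.e.\ $\delta=\Theta(\varepsilon)$, and the super-polynomial gap is erased entirely. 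No bipartition of the $A_i$ avoids this: any $2$-colouring of the vertices of an odd cycle has a monochromatic edge, so at least one full block of $N|S|$ edges of $G_0$ lies within a colour class, and each such edge closes up with $\Theta(N^{2k-1})$ paths through $H$.

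The paper's construction (following Fox--Wigderson) does something qualitatively different: it introduces \emph{new} vertex classes $U_1,\dots,U_{2k+1}$ and joins them by complete bipartite graphs $(U_i,V_i)$ and $(U_i,U_{i+1})$ for $i\in[2k]$ only --- a path among the $U_i$'s, not a cycle. The crucial property is not merely that this add-on is bipartite, but that it is arranged so that \emph{no} copy of $C_{2k+1}$ can use any vertex of $\bigcup_i U_i$ at all; hence the $C_{2k+1}$-count remains exactly that of the Ruzsa--Szemer\'edi graph on $\bigcup_i V_i$, while the $U$-classes alone supply the required minimum degree $(1-\alpha)n/(2k+1)$. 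That confinement of all odd $(2k{+}1)$-cycles to the original $V$-part is the idea your sketch is missing.
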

\begin{proof}
    Fix an arbitrary $\alpha > 0$. 
    In \cite{alon2002testing} it was proved that for every $\varepsilon$, there exists a $(2k+1)$-partite graph with parts $V_1,\dots,V_{2k+1}$ of size $\alpha n/(2k+1)$ each, with $\varepsilon n^2$ edge-disjoint copies of $C_{2k+1}$, but with only $\varepsilon^{\omega(1)}n^{2k+1}$ copies of $C_{2k+1}$ in total (where the $\omega(1)$ term may depend on $\alpha$). Add sets $U_1,\dots,U_{2k+1}$ of size $(1-\alpha)n/(2k+1)$ each, and add the complete bipartite graphs $(U_i,V_i)$, $1 \leq i \leq 2k+1$, and $(U_i,U_{i+1})$, $1 \leq i \leq 2k$.
    See \Cref{fig:polynomial threshold construction for C5}.
    It is easy to see that this graph has minimum degree $(1-\alpha)n/(2k+1)$, and every copy of $C_{2k+1}$ is contained in $V_1 \cup \dots \cup V_{2k+1}$. Letting $\alpha \rightarrow 0$, we get that $\polythres(C_{2k+1}) \geq \frac{1}{2k+1}$.
    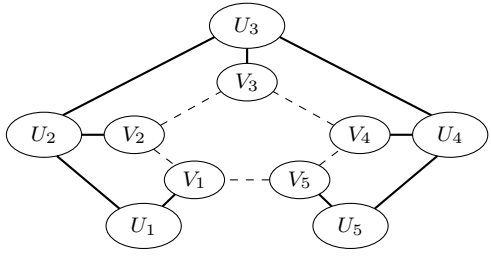
\begin{figure}
    \centering
    \begin{tikzpicture}[scale = 1]
      \coordinate (v1) at (0,0);
      \coordinate (v2) at (1.5,0.7);
      \coordinate (v3) at (3,0);
      \coordinate (v4) at (2.2,-0.6);
      \coordinate (v5) at (0.8,-0.6);
      
      \draw (v1) ellipse(0.4 and 0.25);
      \draw (v2) ellipse(0.4 and 0.25);
      \draw (v3) ellipse(0.4 and 0.25);
      \draw (v4) ellipse(0.4 and 0.25);
      \draw (v5) ellipse(0.4 and 0.25);
      
      \draw (v1) node {\footnotesize $V_2$};
      \draw (v2) node {\footnotesize $V_3$};
      \draw (v3) node {\footnotesize $V_4$};
      \draw (v4) node {\footnotesize $V_5$};
      \draw (v5) node {\footnotesize $V_1$};
      
      \coordinate (v15) at ($(v1) + ({0.4*cos(310)}, {0.25*sin(310)})$);
      \coordinate (v51) at ($(v5) + ({0.4*cos(130)}, {0.25*sin(130})$);
      \coordinate (v12) at ($(v1) + ({0.4*cos(30)}, {0.25*sin(30)})$);
      \coordinate (v21) at ($(v2) + ({0.4*cos(210)}, {0.25*sin(210)})$);
      \coordinate (v32) at ($(v3) + ({0.4*cos(150)}, {0.25*sin(150)})$);
      \coordinate (v23) at ($(v2) + ({0.4*cos(330)}, {0.25*sin(330)})$);
      \coordinate (v34) at ($(v3) + ({0.4*cos(230)}, {0.25*sin(230)})$);
      \coordinate (v43) at ($(v4) + ({0.4*cos(50)}, {0.25*sin(50})$);
      \coordinate (v54) at ($(v5) + ({0.4*cos(0)}, {0.25*sin(0)})$);
      \coordinate (v45) at ($(v4) + ({0.4*cos(180)}, {0.25*sin(180)})$);
      \draw[dashed] (v15) -- (v51);
      \draw[dashed] (v12) -- (v21);
      \draw[dashed] (v32) -- (v23);
      \draw[dashed] (v43) -- (v34);
      \draw[dashed] (v45) -- (v54);

      \coordinate (u1) at ($(v1) - ({1.2}, {0})$);
      \coordinate (u2) at ($(v2) + ({0}, {0.75})$);
      \coordinate (u3) at ($(v3) + ({1.2}, {0})$);
      \coordinate (u4) at ($(v4) + ({1.05*cos(310)}, {0.8*sin(310)})$);
      \coordinate (u5) at ($(v5) + ({1.05*cos(230)}, {0.8*sin(230)})$);
      
      \draw (u1) ellipse(0.5 and 0.3);
      \draw (u2) ellipse(0.5 and 0.3);
      \draw (u3) ellipse(0.5 and 0.3);
      \draw (u4) ellipse(0.5 and 0.3);
      \draw (u5) ellipse(0.5 and 0.3);
      
      \draw (u1) node {\footnotesize $U_2$};
      \draw (u2) node {\footnotesize $U_3$};
      \draw (u3) node {\footnotesize $U_4$};
      \draw (u4) node {\footnotesize $U_5$};
      \draw (u5) node {\footnotesize $U_1$};
      
      \coordinate (u15) at ($(u1) + ({0.5*cos(290)}, {0.3*sin(290)})$);
      \coordinate (u51) at ($(u5) + ({0.5*cos(130)}, {0.3*sin(130})$);
      \coordinate (u12) at ($(u1) + ({0.5*cos(60)}, {0.3*sin(60)})$);
      \coordinate (u21) at ($(u2) + ({0.5*cos(210)}, {0.3*sin(210)})$);
      \coordinate (u32) at ($(u3) + ({0.5*cos(120)}, {0.3*sin(120)})$);
      \coordinate (u23) at ($(u2) + ({0.5*cos(330)}, {0.3*sin(330)})$);
      \coordinate (u34) at ($(u3) + ({0.5*cos(250)}, {0.3*sin(250)})$);
      \coordinate (u43) at ($(u4) + ({0.5*cos(50)}, {0.3*sin(50})$);
      \coordinate (u54) at ($(u5) + ({0.5*cos(0)}, {0.3*sin(0)})$);
      \coordinate (u45) at ($(u4) + ({0.5*cos(180)}, {0.3*sin(180)})$);
      \draw[thick] (u15) -- (u51);
      \draw[thick] (u12) -- (u21);
      \draw[thick] (u32) -- (u23);
      \draw[thick] (u43) -- (u34);
      
      \coordinate (u11) at ($(u1) + ({0.5*cos(0)}, {0.3*sin(0)})$);
      \coordinate (v11) at ($(v1) + ({0.4*cos(180)}, {0.25*sin(180)})$);
      \coordinate (u21) at ($(u2) + ({0.5*cos(270)}, {0.3*sin(270)})$);
      \coordinate (v21) at ($(v2) + ({0.4*cos(90)}, {0.25*sin(90)})$);
      \coordinate (u31) at ($(u3) + ({0.5*cos(180)}, {0.3*sin(180)})$);
      \coordinate (v31) at ($(v3) + ({0.4*cos(0)}, {0.25*sin(0)})$);
      \coordinate (u41) at ($(u4) + ({0.5*cos(120)}, {0.3*sin(120)})$);
      \coordinate (v41) at ($(v4) + ({0.4*cos(310)}, {0.25*sin(310)})$);
      \coordinate (u51) at ($(u5) + ({0.5*cos(60)}, {0.3*sin(60)})$);
      \coordinate (v51) at ($(v5) + ({0.4*cos(230)}, {0.25*sin(230)})$);
      \draw[thick] (u11) -- (v11);
      \draw[thick] (u21) -- (v21);
      \draw[thick] (u31) -- (v31);
      \draw[thick] (u41) -- (v41);
      \draw[thick] (u51) -- (v51);
    \end{tikzpicture}
    \caption{Proof of \cref{lem:C_{2k+1} construction} for $C_5$. Heavy edges indicate complete bipartite graphs while dashed edges form the Ruzsa–Szemer\'edi construction for $C_5$ (see \cite{alon2002testing}).}
    \label{fig:polynomial threshold construction for C5}
    \end{figure}
    \end{proof}

    By combining the fact that $\polythres(C_3) = \frac{1}{3}$ with \Cref{lem:core} (with $F = C_3$), we get that $\linearthres(H) \geq \polythres(H) = \frac{1}{3}$ for every $3$-chromatic graph $H$ containing a triangle. This 
    proves the lower bound in the second case of \Cref{thm:linear threshold}. 
    Now we prove the lower bounds in the other two cases. We prove a more general statement for $r$-chromatic graphs. 
\begin{lemma}\label{thm:lower bounds}
  Let $H$ be a graph with $\chi(H) = r \ge 3$.
  Then, $\frac{3r-8}{3r-5} \le \linearthres(H) \le \frac{r-2}{r-1}$.
  Moreover, $\linearthres(H) = \frac{r-2}{r-1}$ if $H$ contains no critical edge.
\end{lemma}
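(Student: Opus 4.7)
The proof splits naturally into three parts: the upper bound, the generic lower bound, and the sharper lower bound when $H$ has no critical edge.

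For the upper bound $\linearthres(H)\le \frac{r-2}{r-1}$, I would appeal to the Erd\H{o}s--Simonovits supersaturation theorem. Fix any $\gamma > \frac{r-2}{r-1}$ and set $\eta = \gamma - \frac{r-2}{r-1} > 0$. Every $n$-vertex $G$ with $\delta(G) \ge \gamma n$ satisfies $e(G) \ge \gamma n^2/2 \ge \big(\tfrac{r-2}{r-1} + \eta/2\big)\binom{n}{2}$ for $n$ large, which strictly exceeds the Tur\'an density of $H$. Supersaturation then yields $\#H(G) \ge c(\gamma, H)\,n^{v(H)}$, and since $\varepsilon \le 1$ we conclude $\#H(G) \ge c(\gamma, H)\,\varepsilon\,n^{v(H)}$, giving $\linearthres(H)\le\gamma$. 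Letting $\gamma\downarrow\frac{r-2}{r-1}$ proves the claim.

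For the generic lower bound $\linearthres(H) \ge \frac{3r-8}{3r-5}$, I would construct, for every $\gamma < \frac{3r-8}{3r-5}$ and every sufficiently small $\varepsilon>0$, an $n$-vertex graph $G_\varepsilon$ with $\delta(G_\varepsilon)\ge\gamma n$, containing at least $\varepsilon n^2$ edge-disjoint copies of $H$, yet only $\varepsilon^{1+\Omega(1)}\,n^{v(H)}$ copies of $H$ in total. The natural template is to partition $V(G_\varepsilon)$ into $3r-5$ weighted groups and build a dense skeleton between them forcing the minimum degree up to $\frac{3r-8}{3r-5}n$, then substitute an Alon--Shapira-style Ruzsa--Szemer\'edi sparse $H$-construction \cite{alon2002testing} in place of the edges between a designated pair of groups. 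The skeleton supplies the minimum-degree bound, while the sparse block supplies the $\varepsilon n^2$ edge-disjoint copies; the rigid partition structure constrains where $H$-copies can sit so that their total count is sub-linear in $\varepsilon$.

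The sharper lower bound $\linearthres(H) \ge \frac{r-2}{r-1}$ when $H$ has no critical edge calls for a denser skeleton, essentially the Tur\'an graph $T_{r-1}(n)$ (whose minimum degree is exactly $\frac{r-2}{r-1}n$), together with a small Ruzsa--Szemer\'edi $H$-block on an auxiliary set of size $o(n)$. Here the critical-edge hypothesis is essential: if an $H$-copy in $G_\varepsilon$ had all but one edge inside the $T_{r-1}$-skeleton, removing that single outside edge $e$ would yield an embedding of $H-e$ into $T_{r-1}$, contradicting $\chi(H-e) = r$. Iterating this removal-style argument would show that every $H$-copy uses enough edges of the sparse block that its total count is sub-linear in $\varepsilon$.

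The main obstacle in both lower bounds is verifying the upper bound on the number of $H$-copies, and in particular controlling the \emph{hybrid} copies that use edges from both the dense skeleton and the sparse Ruzsa--Szemer\'edi block. This requires a case analysis over how $V(H)$ is distributed among the parts of the skeleton: chromatic obstructions rule out most distributions (using $\chi(H)=r$ in the generic case and the critical-edge-freeness in the sharper case), and the remaining contributions are bounded via the sparsity of the block. The critical-edge-free case is especially delicate, since it requires repeated use of the chromatic rigidity to exclude copies that only barely enter the sparse region.
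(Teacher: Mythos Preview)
Your upper bound via Erd\H{o}s--Simonovits supersaturation matches the paper exactly. The lower-bound constructions, however, are both over-engineered and, in the critical-edge-free case, genuinely flawed.

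The paper does \emph{not} use any Ruzsa--Szemer\'edi machinery for this lemma. For linear thresholds it suffices to exhibit a graph with $\Theta(\varepsilon n^2)$ edge-disjoint $H$-copies but only $O(\varepsilon^2 n^h)$ total copies --- polynomial, not super-polynomial, dependence is already sub-linear. This is achieved by elementary regular graphs. For the critical-edge-free case, the paper takes the Tur\'an graph $T(n,r-1)$ and adds an $\varepsilon n$-regular graph \emph{inside} one part $V_1$. Since $H$ has no critical edge, every $H$-copy must use at least two edges inside $V_1$; counting pairs of such edges (disjoint or intersecting) immediately gives $O(\varepsilon^2 n^h)$ copies. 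On the other side, there are $\Omega(\varepsilon n^r)$ copies of $K_r$ each contained in at most $n^{r-2}$ edges' worth of cliques, so $\Omega(\varepsilon n^2)$ deletions are needed, and \Cref{lem:blowup-count} finishes. Your proposed ``auxiliary set of size $o(n)$'' carrying a Ruzsa--Szemer\'edi block would destroy the minimum-degree condition: vertices in that set would have degree $o(n)$, not $\frac{r-2}{r-1}n$.

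For the generic bound $\frac{3r-8}{3r-5}$, the paper again avoids Ruzsa--Szemer\'edi. It builds an explicit $(r{+}1)$-part graph (not $3r-5$ parts): sets $V_0,\dots,V_r$ with $|V_0|=\cdots=|V_3|=\frac{n}{3r-5}$ and $|V_4|=\cdots=|V_r|=\frac{3n}{3r-5}$, complete bipartite graphs between most pairs, and two $\varepsilon n$-regular bipartite graphs between $(V_1,V_2)$ and $(V_1,V_3)$. The point is that deleting either sparse bipartite graph makes the whole graph $(r-1)$-colourable, so every $H$-copy uses an edge from each, again forcing $O(\varepsilon^2 n^h)$ total copies. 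The same $K_r$-counting argument shows $\Omega(\varepsilon n^2)$ edges must be removed. Your sketch gestures at the right skeleton but neither identifies this two-sparse-edge mechanism nor gives a concrete construction; the ``hybrid case analysis'' you anticipate is in fact a two-line argument once the right construction is in hand.
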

\begin{proof}
  Denote $h = |V(H)|$. The bound $\linearthres(H) \leq \frac{r-2}{r-1}$ holds for every $r$-chromatic graph $H$; this follows from the Erd\H{o}s-Simonovits supersaturation theorem, see by~\cite[Section 4.1]{fox2021minimum} for the details.

  Suppose now that $H$ contains no critical edge, and let us show that $\linearthres(H) \ge \frac{r-2}{r-1}$. To this end, we construct, for every small enough $\varepsilon$ and infinitely many $n$, an $n$-vertex graph $G$ with $\delta(G) \ge \frac{r-2}{r-1} n$, such that $G$ has at most $\mathcal{O}(\varepsilon^2 n^h)$ copies of $H$, but $\Omega(\varepsilon n^2)$ edges must be deleted to turn $G$ into an $H$-free graph.
  Let $T(n,r-1)$ be the Tur\'an graph, i.e. the complete $(r-1)$-partite graph with balanced parts $V_1, \dots, V_{r-1}$.
  Add an $\varepsilon n$-regular graph inside $V_1$ and let the resulting graph be $G$. 
  We first claim that $G$ contains $\mathcal{O}(\varepsilon^2 n^h)$ copies of $H$.
  As $H$ contains no critical edge and $\chi(H) = r$, every copy of $H$ in $G$ contains two edges $e$ and $e'$ inside $V_1$.
  If $e$ and $e'$ are disjoint, then there are at most $n^2 (\varepsilon n)^2 = \varepsilon^2 n^4$ choices for $e$ and $e'$ and then at most $n^{h-4}$ choices for the other $h-4$ vertices of $H$.
  Therefore, there are at most $\varepsilon^2 n^h$ such $H$-copies.
  And if $e$ and $e'$ intersect, then there are at most $n (\varepsilon n)^2 = \varepsilon^2 n^3$ choices for $e$ and $e'$ and then at most $n^{h-3}$ choices for the remaining vertices, again giving at most $\varepsilon^2 n^h$ such $H$-copies.
  So $G$ indeed has $\mathcal{O}(\varepsilon^2 n^h)$ copies of $H$.
  
  On the other hand, we claim that one must delete $\Omega(\varepsilon n^2)$ edges to destroy all $H$-copies in $G$.
  Observe that $G$ has at least 
  $\frac{1}{2}\abs{V_1} \cdot \varepsilon n \cdot \abs{V_2} \cdot \dots \cdot \abs{V_{r-1}} = \Omega_r(\varepsilon n^r)$ copies of $K_r$, and every edge participates in at most $n^{r-2}$ of these copies. 
  Thus, deleting $c\varepsilon n^2$ edges can destroy at most $c\varepsilon n^r$ copies of $K_r$.
  If $c$ is a small enough constant (depending on $r$), then after deleting any $c\varepsilon n^2$ edges, there are still $\Omega(\varepsilon n^r)$ copies of $K_r$.
  Then, by \cref{lem:blowup-count}, the remaining graph contains $K_r[h]$, the $h$-blowup of $K_r$, and hence $H$.
  This completes the proof that $\linearthres(H) \ge \frac{r-2}{r-1}$.

  We now prove that $\linearthres(H) \geq \frac{3r-8}{3r-5}$ for every $r$-chromatic graph $H$. 
  It suffices to construct, for every small enough $\varepsilon$ and infinitely many $n$, an $n$-vertex graph $G$ with $\delta(G) \ge \frac{3r-8}{3r-5}n$, such that $G$ has at most $\mathcal{O}(\varepsilon^2 n^h)$ copies of $H$ but at least $\Omega(\varepsilon n^2)$ edges must be deleted to turn $G$ into an $H$-free graph. 
  The vertex set of $G$ consists of $r+1$ disjoint sets $V_0, V_1, V_2, \dots, V_r$, where $\abs{V_i} = \frac{n}{3r-5}$ for $i = 0, 1, 2, 3$ and $\abs{V_i} = \frac{3n}{3r-5}$ for $i = 4, 5, \dots, r$.
  Put complete bipartite graphs between $V_0$ and $V_1$, between $V_0 \cup V_1$ and $V_4 \cup \dots \cup V_r$, and between $V_i$ to $V_j$ for all $2 \le i < j \le r$.
  Put $\varepsilon n$-regular bipartite graphs between $V_1$ and $V_2$, and between $V_1$ and $V_3$. The resulting graph is $G$ (see \Cref{fig:linear threshold constructions}). 
  It is easy check that $\delta(G) \ge \frac{3r-8}{3r-5} n$.
  Indeed, let $0 \leq i \leq r$ and $v \in V_i$. If $4 \leq i \leq r$ then $v$ is connected to all vertices except for $V_i$; if $i  \in \{2,3\}$ then $v$ is connected to all vertices except $V_0 \cup V_1 \cup V_i$; and if $i \in \{0,1\}$ then $v$ is connected to all vertices except $V_2 \cup V_3 \cup V_i$. In any case, the neighborhood of $v$ misses at most $\frac{3n}{3r-5}$ vertices. 
  \begin{figure}
    \centering
    \begin{tikzpicture}[scale = 1]
      \coordinate (v1) at (0,1.3);
      \coordinate (v2) at (-1,0);
      \coordinate (v3) at (1,0);
      \coordinate (v0) at (3,0.5);
      
      \draw (v1) ellipse(0.5 and 0.3);
      \draw (v2) ellipse(0.5 and 0.3);
      \draw (v3) ellipse(0.5 and 0.3);
      \draw (v0) ellipse(0.5 and 0.3);
      
      \draw (v1) node {\footnotesize  $V_1$};
      \draw (v2) node {\footnotesize $V_2$};
      \draw (v3) node {\footnotesize $V_3$};
      \draw (v0) node {\footnotesize $V_0$};
      
      \coordinate (v11) at ($(v1) + ({0.5*cos(240)}, {0.3*sin(240)})$);
      \coordinate (v12) at ($(v1) + ({0.5*cos(300)}, {0.3*sin(300)})$);
      \coordinate (v13) at ($(v1) + ({0.5*cos(-20)},{0.3*sin(-20)})$);
      \coordinate (v21) at ($(v2) + ({0.5*cos(60)}, {0.3*sin(60)})$);
      \coordinate (v22) at ($(v2) + ({0.5}, {0})$);
      \coordinate (v31) at ($(v3) + ({-0.5*cos(60)}, {0.3*sin(60)})$);
      \coordinate (v32) at ($(v3) - ({0.5}, {0})$);
      \coordinate (v01) at ($(v0) + ({0.5*cos(160)},{0.3*sin(160)})$);
      
      \draw[dashed] (v11) -- (v21);
      \draw[dashed] (v12) -- (v31);
      \draw[thick] (v22) -- (v32);
      \draw[thick] (v13) -- (v01);
    \end{tikzpicture}
    \hspace{2cm}
    \begin{tikzpicture}[scale = 1]
      \coordinate (v1) at (0,0.8);
      \coordinate (v2) at (-1.5,0);
      \coordinate (v3) at (1.2,0);
      \coordinate (v4) at (0,-0.8);
      \coordinate (v0) at (2.8,0);
      
      \draw (v1) ellipse(0.5 and 0.3);
      \draw (v2) ellipse(0.5 and 0.3);
      \draw (v3) ellipse(0.5 and 0.3);
      \draw (v4) ellipse(0.75 and 0.45);
      \draw (v0) ellipse(0.5 and 0.3);
      
      \draw (v1) node {\footnotesize  $V_1$};
      \draw (v2) node {\footnotesize $V_2$};
      \draw (v3) node {\footnotesize $V_3$};
      \draw (v4) node {\footnotesize $V_4$};
      \draw (v0) node {\footnotesize $V_0$};
      
      \coordinate (v11) at ($(v1) + ({0.5*cos(220)}, {0.3*sin(220)})$);
      \coordinate (v12) at ($(v1) + ({0.5*cos(330)}, {0.3*sin(330)})$);
      \coordinate (v13) at ($(v1) + ({0.5*cos(-20)},{0.3*sin(-20)})$);
      \coordinate (v14) at ($(v1) + ({0}, {-0.3})$);
      \coordinate (v21) at ($(v2) + ({0.5*cos(60)}, {0.3*sin(60)})$);
      \coordinate (v22) at ($(v2) + ({0.5}, {0})$);
      \coordinate (v23) at ($(v2) + ({0.5*cos(60)}, {-0.3*sin(60)})$);
      \coordinate (v31) at ($(v3) + ({-0.5*cos(60)}, {0.3*sin(60)})$);
      \coordinate (v32) at ($(v3) - ({0.5}, {0})$);
      \coordinate (v33) at ($(v3) + ({-0.5*cos(45)}, {-0.3*sin(45)})$);
      \coordinate (v01) at ($(v0) + ({0.5*cos(140)},{0.3*sin(140)})$);
      \coordinate (v02) at ($(v0) + ({0.5*cos(220)},{0.3*sin(220)})$);
      \coordinate (v41) at ($(v4) + ({0}, {0.45})$);
      \coordinate (v42) at ($(v4) + ({0.75*cos(140)}, {0.45*sin(140)})$);
      \coordinate (v43) at ($(v4) + ({0.75*cos(50)}, {0.45*sin(50)})$);
      \coordinate (v44) at ($(v4) + ({0.75*cos(20)}, {0.45*sin(20)})$);
      
      \draw[dashed] (v11) -- (v21);
      \draw[dashed] (v12) -- (v31);
      \draw[thick] (v22) -- (v32);
      \draw[thick] (v13) -- (v01);
      \draw[thick] (v14) -- (v41);
      \draw[thick] (v23) -- (v42);
      \draw[thick] (v33) -- (v43);
      \draw[thick] (v02) -- (v44);
    \end{tikzpicture}
    
    \caption{Proof of \cref{thm:lower bounds}, $r=3$ (left) and $r=4$ (right). Heavy edges indicate complete bipartite graphs while dashed edges indicate $\varepsilon n$-regular bipartite graphs.}
    \label{fig:linear threshold constructions}
  \end{figure}
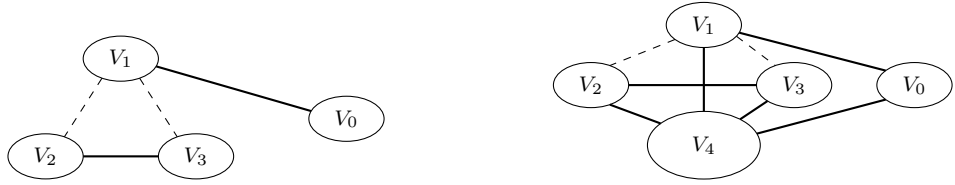
  
  We claim that $G$ has at most $\mathcal{O}(\varepsilon^2 n^h)$ copies of $H$. Indeed, observe that if we delete all edges between $V_1$ and $V_2$ then the remaining graph is $(r-1)$-colorable with coloring $V_1 \cup V_2, V_0 \cup V_3, V_4,\dots,V_r$. Hence, every copy of $H$ must contain an edge $e$ between $V_1$ and $V_2$. Similarly, every copy of $H$ must contain an edge $e'$ between $V_1$ and $V_3$. If $e,e'$ are disjoint then there are at most $n^2 (\varepsilon n)^2 = \varepsilon^2n^4$ ways to choose $e,e'$ and then at most $n^{h-4}$ ways to choose the remaining vertices of $H$. And if $e$ and $e'$ intersect then there are at most $n(\varepsilon n)^2 = \varepsilon^2 n^3$ ways to choose $e,e'$ and at most $n^{h-3}$ for the remaining $h-3$ vertices of $H$.
  In both cases, the number of $H$-copies is at most $\varepsilon^2 n^h$, as required. 
  
  Now we show that one must delete $\Omega(\varepsilon n^2)$ edges to destroy all copies of $H$ in $G$. 
  Observe that $G$ has $|V_1| \cdot (\varepsilon n)^2 \cdot  |V_4| \cdot \dots \cdot |V_r| = \Omega(\varepsilon^2 n^r)$ copies of $K_r$ between the sets $V_1,\dots,V_r$. 
  We claim that every edge $f$ participates in at most $\varepsilon n^{r-2}$ of these $r$-cliques. 
  Indeed, by the same argument as above, every copy of $K_r$ containing $f$ must contain an edge $e$ from $E(V_1,V_2)$ and an edge $e'$ from $E(V_1,V_3)$. Suppose without loss of generality that $e \neq f$ (the case $e' \neq f$ is symmetric). 
  In the case $f \cap e = \emptyset$, there are at most $n \cdot \varepsilon n = \varepsilon n^2$ choices for $e$ and at most $n^{r-4}$ choices for the remaining vertices of $K_r$, giving at most $\varepsilon n^{r-2}$ copies of $K_r$ containing $f$. 
  And if $f,e$ intersect, then there are at most $\varepsilon n$ choices for $e$ and at most $n^{r-3}$ for the remaining $r-3$ vertices, giving again $\varepsilon n^{r-2}$. 
  
  We see that deleting $c\varepsilon n^2$ edges of $G$ can destroy at most $c\varepsilon^2 n^{r}$ copies of $K_r$. Hence, if $c$ is a small enough constant, then after deleting any $c \varepsilon n^2$ edges there are still $\Omega(\varepsilon^2 n^r)$ copies of $K_r$ left. By \Cref{lem:blowup-count}, the remaining graph contains a copy of $K_r[h]$ and hence $H$. This completes the proof. 
  \end{proof}

\section{Polynomial removal thresholds: Proof of \Cref{thm:poly-hom}}
\label{sec:poly}

We say that an $n$-vertex graph $G$ is {\em $\varepsilon$-far} from a graph property $\mathcal{P}$ (e.g. being $H$-free for a given graph $H$, or being homomorphic to a given graph $F$) if one must delete at least $\varepsilon n^2$ edges to make $G$ satisfy $\mathcal{P}$. Trivially, if $G$ has $\varepsilon n^2$ edge-disjoint copies of $H$, then it is $\varepsilon$-far from being $H$-free. We need the following result from \cite{NR}. 
\begin{theorem}\label{thm:GGR}
  For every graph $F$ on $f$ vertices and for every $\varepsilon > 0$, there is $q = q_F(\varepsilon) = \poly(f/\varepsilon)$, such that the following holds. If a graph $G$ is $\varepsilon$-far from being homomorphic to $F$, then for a sample of $q$ vertices $x_1,\dots,x_q \in V(G)$, taken uniformly with repetitions, it holds that $G[\{x_1,\dots,x_q\}]$ is not homomorphic to $F$ with probability at least $\frac{2}{3}$.
\end{theorem}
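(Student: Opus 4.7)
The plan is to follow the canonical sampling analysis for partition properties: ``admits a homomorphism to $F$'' is the prototypical partition property in which each vertex is assigned a color from $V(F)$ subject to the constraint that every edge of $G$ maps to an edge of $F$. Write $f = |V(F)|$ and split the sample as $S = S_1 \cupdot S_2$, with $|S_1| = q_1 = \Theta(f/\varepsilon)$ and $|S_2| = q_2 = \Theta\bigl(\sqrt{q_1 \log f/\varepsilon}\bigr)$, so the total query complexity $q = q_1 + q_2$ is $\poly(f/\varepsilon)$; both parts are drawn uniformly at random with repetitions.

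For each candidate coloring $\psi\colon S_1 \to V(F)$, I will define a deterministic \emph{canonical extension} $\varphi_\psi\colon V(G) \to V(F)\cup\{\bot\}$ of $\psi$ to the full vertex set. Set
\[
A_v(\psi) := \{c \in V(F) : c\,\psi(x) \in E(F) \text{ for every } x \in S_1 \cap N_G(v)\},
\]
and let $\varphi_\psi(v)$ be a deterministically selected element of $A_v(\psi)$ (using a fixed order on $V(F)$) whenever $A_v(\psi) \neq \emptyset$; otherwise set $\varphi_\psi(v) = \bot$. Call an edge $uv \in E(G)$ a \emph{$\psi$-defect} if $\varphi_\psi(u) = \bot$, $\varphi_\psi(v) = \bot$, or $\varphi_\psi(u)\varphi_\psi(v) \notin E(F)$. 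The first key observation is that if $G$ is $\varepsilon$-far from being $F$-homomorphic, then every $\psi$ produces at least $\varepsilon n^2$ $\psi$-defects: otherwise, deleting the defect edges and reassigning the $\bot$-vertices to any color of $V(F)$ would yield an $F$-homomorphism of a graph obtained from $G$ by removing fewer than $\varepsilon n^2$ edges, contradicting $\varepsilon$-farness.

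The second step uses $S_2$ to expose a defect for every candidate $\psi$. For each fixed $\psi$, a uniformly random pair from $V(G)$ is a $\psi$-defect with probability at least $\varepsilon$, so the probability that $G[S_2]$ contains no $\psi$-defect is at most $\exp(-\Omega(\varepsilon q_2^2))$ by a standard tail bound. Union-bounding over the $f^{q_1}$ choices of $\psi$ and calibrating $q_2 = \Theta\bigl(\sqrt{q_1 \log f/\varepsilon}\bigr)$, with probability at least $2/3$ every $\psi \colon S_1 \to V(F)$ has some $\psi$-defect inside $G[S_2]$.

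The delicate final step is to deduce that $G[S]$ is not $F$-homomorphic. If $\chi\colon S \to V(F)$ were a homomorphism of $G[S]$ and $\psi := \chi|_{S_1}$, then the edge constraints between $S_2$ and $S_1$ force $\chi(v) \in A_v(\psi)$ for every $v \in S_2$; however, $\chi|_{S_2}$ need not match the canonical $\varphi_\psi|_{S_2}$, so the $\psi$-defect from the previous step need not be violated by $\chi$ itself. I resolve this by refining the defect notion to consider, for each $\psi$, \emph{every} continuation $\omega \colon S_2 \to V(F)$ with $\omega(v) \in A_v(\psi)$ (when this set is non-empty); declare a pair $(\psi,\omega)$ bad if the greedy extension of $\psi \cup \omega$ to $V(G)\setminus S$ still produces at least $\varepsilon n^2$ violating edges. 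The union bound now runs over $f^{q_1+q_2}$ pairs, which is compensated by enlarging $q_2$ by a constant factor and the total sample complexity remains $\poly(f/\varepsilon)$. A hypothetical homomorphism $\chi$ corresponds to a bad pair via $\omega = \chi|_{S_2}$, so the guaranteed defect becomes a genuine violation of $\chi$ inside $G[S_2]$, a contradiction. The main obstacle is precisely this refinement: it requires checking that the enlarged union bound still closes and that the choice of $\omega$ compatible with $\psi$ is the right handle for relating defects of the canonical coloring to actual edges violated by $\chi$.
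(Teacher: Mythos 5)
The paper does not actually prove this theorem; it cites it, noting that \cite{NR} establishes testability of the class $\mathcal{GPP}_{0,1}$ of $0/1$-partition properties with $\poly(f/\varepsilon)$ queries, and that ``$G$ is homomorphic to $F$'' is such a property. Your proposal instead attempts a from-scratch proof in the Goldreich--Goldwasser--Ron two-stage style, so there is no paper proof to compare against line by line; I will therefore assess the proposal on its own terms, and it has genuine gaps.

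First, the tail bound is wrong. You claim that if a fixed $\psi$ has $\geq \varepsilon n^2$ defect pairs, then $q_2$ samples miss all of them with probability at most $\exp(-\Omega(\varepsilon q_2^2))$. The correct worst-case bound is $\exp(-\Omega(\varepsilon q_2))$: one cannot treat the $\binom{q_2}{2}$ pairs as independent, and the standard fix (grouping $S_2$ into $q_2/2$ disjoint independent pairs) yields only a linear exponent. This is tight: if all defect edges are incident to a set of $\varepsilon n$ vertices, $S_2$ misses every defect exactly when it misses that set, which happens with probability about $(1-\varepsilon)^{q_2}$. This matters because the constant that is supposed to ``compensate'' the enlarged union bound in your final step comes from this overly optimistic exponent.

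Second, and more fundamentally, the refinement in your last step does not close. With the corrected bound $\exp(-\Omega(\varepsilon q_2))$, a union bound over $f^{q_1+q_2}$ pairs $(\psi,\omega)$ requires $\varepsilon q_2 \gg (q_1 + q_2)\log f$, which forces $\varepsilon \gg \log f$ and is impossible; with your incorrect quadratic exponent the numerics happen to work out, which is why the gap is easy to miss. Independently of the numerics, the notion of ``bad pair'' you introduce is not the right handle: you declare $(\psi,\omega)$ bad if the greedy extension of $\psi\cup\omega$ to $V(G)\setminus S$ has $\geq \varepsilon n^2$ violating edges, but those violating edges live outside $S$, and $S_2$ has already been consumed in defining $\omega$. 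There is no remaining randomness with which to expose a violation of $\psi\cup\omega$ \emph{inside} $G[S]$, which is what is actually needed to rule out the homomorphism $\chi$. Any attempt to re-split $S_2$ and iterate runs into the same regress, since $\chi$ restricted to the new piece again need not match the canonical choice.

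You correctly flag the delicacy of passing from ``the canonical extension $\varphi_\psi$ has a defect in $S_2$'' to ``$G[S]$ is not $F$-homomorphic''; this is exactly where GGR-type analyses are subtle. The resolution in \cite{NR} (and in Goldreich--Goldwasser--Ron) does not union-bound over continuations $\omega$. Rather, it establishes a structural statement about $S_1$: with high probability $S_1$ is ``representative'', in a sense which forces any coloring of $S_2$ consistent with a given $\psi$ on $S_1$ to inherit many violations from a fixed reference partition of $V(G)$; the union bound is then taken only over the $f^{q_1}$ choices of $\psi$ (and a bounded number of cluster-level choices), not over all of $f^{q_2}$. Without this ingredient, the proposal does not constitute a proof of the theorem.
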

  \Cref{thm:GGR} is proved in Section 2 of \cite{NR}.  
  In fact, \cite{NR} proves a more general result on property testing of the so-called 0/1-partition properties. 
  Such a property is given by an integer $f$ and a function $d : [f]^2 \rightarrow \{0,1,\perp\}$, and a graph $G$ satisfies the property if it has a partition $V(G) = V_1 \cup \dots \cup V_f$ such that for every $1 \leq i,j \leq f$ (possibly $i=j$), it holds that $(V_i,V_j)$ is complete if $d(i,j) = 1$ and $(V_i,V_j)$ is empty if $d(i,j) = 0$ (if $d(i,j) = \perp$ then there are no restrictions). 
  One can express the property of having a homomorphism into $F$ in this language, simply by setting $d(i,j) = 0$ for $i = j$ and $ij \notin E(F)$. 
  In \cite{NR}, the class of these partition properties is denoted $\mathcal{GPP}_{0,1}$, and every such property is shown to be testable by sampling $\poly(f/\varepsilon)$ vertices. This implies \Cref{thm:GGR}.
\begin{proof}[Proof of \Cref{thm:poly-hom}]
Recall that $\mathcal{I}_H$ is the set of minimal graphs $H'$ (with respect to inclusion) such that $H$ is homomorphic to $H'$. For convenience, put $\delta := \delta_{\text{hom}}(\mathcal{I}_H)$.
Our goal is to show that $\polythres(H) \leq \delta+\alpha$ for every $\alpha > 0$. 
So fix $\alpha > 0$ and let $G$ be a graph with minimum degree $\delta(G) \geq (\delta + \alpha)n$ and with $\varepsilon n^2$ edge-disjoint copies of $H$. 
By the definition of the homomorphism threshold, there is an $\mathcal{I}_H$-free graph $F$ (depending only on $\mathcal{I}_H$ and $\alpha$) such that if a graph $G_0$ is $\mathcal{I}_H$-free and has minimum degree at least $(\delta + \frac{\alpha}{2}) \cdot |V(G_0)|$, then $G_0$ is homomorphic to $F$. 
Observe that if a graph $G_0$ is homomorphic to $F$ then $G_0$ is $H$-free, because $F$ is free of any homomorphic image of $H$. It follows that $G$ is $\varepsilon$-far from being homomorphic to $F$, because $G$ is $\varepsilon$-far from being $H$-free.
Now we apply Theorem \ref{thm:GGR}. 
Let $q = q_F(\varepsilon)$ be given by Theorem \ref{thm:GGR}. 
We assume that $q \gg \frac{\log(1/\alpha)}{\alpha^2}$ and $n \gg q^2$ without loss of generality.
Sample $q$ vertices $x_1,\dots,x_q \in V(G)$ with repetition and let $X = \{x_1,\dots,x_q\}$.
By Theorem \ref{thm:GGR}, $G[X]$ is not homomorphic to $F$ with probability at least $2/3$.
As $n \gg q^2$, the vertices $x_1,\dots,x_q$ are pairwise-distinct with probability at least $0.99$. 
Also, for every $i \in [q]$,
the number of indices $j \in [q] \setminus \{i\}$ with $x_ix_j \in E(G)$ dominates a binomial distribution $\operatorname{B}(q-1, \frac{\delta(G)}{n})$. By the Chernoff bound (see e.g. \cite[Appendix A]{AlonSpencer}) and as $\delta(G) \geq (\delta + \alpha)n$, the number of such indices is at least $(\delta + \frac{\alpha}{2})q$ with probability $1 - e^{-\Omega(q \alpha^2)}$. Taking the union bound over $i \in [q]$, we get that $\delta(G[X]) \geq (\delta + \frac{\alpha}{2})|X|$ with probability at least $1 - qe^{-\Omega(q\alpha^2)} \geq 0.9$,
as $q \gg \frac{\log(1/\alpha)}{\alpha^2}$.
Hence, with probability at least $\frac{1}{2}$ it holds that $\delta(G[X]) \geq (\delta + \frac{\alpha}{2})|X|$ and $G[X]$ is not homomorphic to $F$. 
If this happens, then $G[X]$ is not $\mathcal{I}_H$-free (by the choice of $F$), hence $G[X]$ contains a copy of some $H' \in \mathcal{I}_H$. 
By averaging, there is $H' \in \mathcal{I}_H$ such that $G[X]$ contains a copy of $H'$ with probability at least $\frac{1}{2|\mathcal{I}_H|}$. 
Put $k = |V(H')|$ and let $M$ be the number of copies of $H'$ in $G$. 
The probability that $G[X]$ contains a copy of $H'$ is at most 
$M (\frac{q}{n})^k$.
Using the fact that $q = \poly_{H,\alpha}(\frac{1}{\varepsilon})$, we conclude that $M \geq \frac{1}{2|\mathcal{I}_H|}\cdot (\frac{n}{q})^k \geq \poly_{H,\alpha}(\varepsilon) n^k$. 
As $H \rightarrow H'$, there exists $H''$, a blow-up of $H'$, such that $H''$ have the same number of vertices as $H$, and that $H \subset H''$.
By \cref{lem:blowup-count} for $H'$ with $V_i = V(G)$ for all $i$, there exist $\poly_{H,\alpha}(\varepsilon) n^{v(H'')}$ copies of $H''$ in $G$, and thus $\poly_{H,\alpha}(\varepsilon) n^{v(H)}$ copies of $H$.
This completes the proof. \end{proof}

\section{Linear removal thresholds: Proof of \Cref{thm:linear threshold}}
\label{sec:linear}
Here we prove the upper bounds in \Cref{thm:linear threshold}; the lower bounds were proved in \Cref{sec:lower bounds}. 
The first case of \Cref{thm:linear threshold} follows from \Cref{thm:lower bounds}, so it remains to prove the other two cases.
We begin with some preparation. 
For disjoint sets $A_1,\dots,A_m$, we write $\ecycle{m}$ to denote all pairs of vertices which have one endpoint in $A_i$ and one in $A_{i+1}$ for some $1 \leq i \leq m$, with subscripts always taken modulo $m$. So a graph $G$ has a homomorphism to the cycle $C_m$ if and only if there is a partition $V(G) = A_1 \cup \dots \cup A_m$ with $E(G) \subseteq \ecycle{m}$.
\begin{lemma} \label{lem:chi3-decomposition}
  Suppose $H$ is a graph such that $\chi(H) = 3$, $H$ contains a critical edge $xy$, and $\oddgirth(H) \ge 2k+1$.
  Then, 
  \begin{itemize}
    \item There is a partition $V(H) = A_1 \cupdot A_2 \cupdot A_3 \cupdot B$ such that $A_1=\{x\}, A_2=\{y\}$ and $E(H) \subseteq  (A_3 \times B) \cup (\ecycle{3})$;
    \item if $k \ge 2$, there is a partition $V(H) = A_1 \cupdot A_2 \cupdot \dots \cupdot A_{2k+1}$ such that $A_1=\{x\}, A_2=\{y\}$ and $E(H) \subseteq \ecycle{2k+1}$.
    In particular, $H$ is homomorphic to $C_{2k+1}$.
  \end{itemize}
\end{lemma}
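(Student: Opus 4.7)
Both parts turn on the fact that $xy$ being critical makes $H-xy$ bipartite, and in any $2$-coloring $(P,Q)$ of $H-xy$ the vertices $x$ and $y$ must lie on the same side (else the coloring would extend to $H$, contradicting $\chi(H)=3$). Fix such a bipartition with $x,y\in P$.

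For the first bullet, take $A_1:=\{x\}$, $A_2:=\{y\}$, $A_3:=Q$, and $B:=P\setminus\{x,y\}$. The only edge of $H$ with both endpoints in $P$ is $xy$ itself, which lies in $A_1\times A_2\subseteq\ecycle{3}$; every other edge of $H$ crosses from $P$ to $Q$, so edges incident to $x$ or $y$ go to $A_3$ (landing in $A_1\times A_3$ or $A_2\times A_3$), while edges between $B$ and $A_3=Q$ lie in $A_3\times B$. Every edge is accounted for.

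For the second bullet, assume first that $H$ is connected; other components of $H$ are bipartite and can be slotted between, say, $A_3$ and $A_4$. Then $H-xy$ is connected as well, for otherwise $xy$ would be a bridge and $H$ itself bipartite. Since every odd cycle of $H$ passes through $xy$, the odd-girth hypothesis forces $d_{H-xy}(x,y)\geq 2k$. Perform a BFS from $y$ in $H-xy$ and let $L_i:=\{v\in V(H):d_{H-xy}(v,y)=i\}$; in particular $L_0=\{y\}$ and $x\in L_d$ for some even $d\geq 2k$. Define
\[
A_1:=\{x\},\quad A_2:=L_0,\quad A_i:=L_{i-2}\ \text{for}\ 3\leq i\leq 2k-1,
\]
\[
A_{2k}:=\Bigl(\bigcup_{j\geq 0}L_{2k-2+2j}\Bigr)\setminus\{x\},\quad A_{2k+1}:=\bigcup_{j\geq 0}L_{2k-1+2j}.
\]

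To check $E(H)\subseteq\ecycle{2k+1}$: the edge $xy$ lies in $A_1\times A_2$; every other edge is in the bipartite graph $H-xy$ and hence joins consecutive layers $L_i,L_{i+1}$. When $i\leq 2k-3$ this is an edge in $A_{i+2}\times A_{i+3}$; when $i=2k-2$ it is an edge in $A_{2k}\times A_{2k+1}$; and when $i\geq 2k-1$ both endpoints are swept into $A_{2k}\cup A_{2k+1}\cup\{x\}$, where by parity the edge lands in $A_{2k}\times A_{2k+1}$ or, if $x$ is an endpoint, in $A_1\times A_{2k+1}$. The main (if mild) obstacle is making this folding of the high layers rigorous; it succeeds precisely because in $C_{2k+1}$ the class $A_{2k+1}$ is adjacent to both $A_{2k}$ and $A_1$ via the wrap-around edge, so all of $L_{2k-1},L_{2k},L_{2k+1},\dots$ can be absorbed into $A_{2k}$ (even layers, $x$ excised) and $A_{2k+1}$ (odd layers) without producing any forbidden intra-class edge.
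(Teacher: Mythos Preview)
Your argument is correct. The first bullet is handled exactly as in the paper. For the second bullet you take a genuinely different route: the paper runs two breadth-first searches in $H-xy$, one from $x$ and one from $y$, defines the first $k$ layers $X_1,\dots,X_k$ and $Y_1,\dots,Y_k$ from each, uses the odd-girth bound to show $X_i\cap Y_j=\emptyset$, and then threads the leftover vertices $L',R'$ into the middle of the cyclic order $X_1,Y_1,Y_2,\dots,Y_k,(L' \text{ or } R'),(R' \text{ or } L'),X_k,\dots,X_2$. You instead run a single BFS from $y$, place the first $2k-2$ layers directly as $A_2,\dots,A_{2k-1}$, collapse all deeper layers by parity into $A_{2k}$ and $A_{2k+1}$, and excise $x$ into $A_1$; the key observation that makes this work is that $d_{H-xy}(x,y)\ge 2k$, so every neighbour of $x$ in $H-xy$ lies in an odd layer of index at least $2k-1$ and hence in $A_{2k+1}$, which is adjacent to $A_1$ in $C_{2k+1}$. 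Your version is shorter and avoids both the disjointness check for the $X_i,Y_j$ and the even/odd case split on $k$ that the paper needs; the paper's construction, on the other hand, is symmetric in $x$ and $y$ and keeps both endpoints as singleton layers by design rather than by a separate excision step.
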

\begin{proof}
  Write $H' = H  - xy$, so $H'$ is bipartite. Let $V(H) = V(H') = L \cupdot R$ be a bipartition of $H'$. 
  As $\chi(H) = 3$, $x$ and $y$ must both lie in the same side of the bipartition.
  Without loss of generality, assume that $x, y \in L$.
  For the first item, set $A_1 = \{x\}$, $A_2 = \{y\}$, $A_3 = R$ and  $B = L \backslash \{x,y\}$. 
  Then every edge of $G$ goes between $B$ and $A_3$ or between two of the sets $A_1,A_2,A_3$, as required. 
  
  Suppose now that $k \ge 2$, i.e. $\oddgirth(H) = 2k+1 \ge 5$. For $1 \leq i \leq k$, let $X_i$ be the set of vertices at distance $(i-1)$ from $x$ in $H'$, and let $Y_i$ be the set of vertices at distance $(i-1)$ from $y$ in $H'$.
  Note that $X_1 = \{x\}$ and $Y_1 = \{y\}$.
  Also, $X_i,Y_i$ lie in $L$ if $i$ is odd and in $R$ if $i$ is even.
  Write
\[
    L' := L \backslash \bigcup_{i=1}^k (X_i \cup Y_i), \quad 
    R' := R \backslash \bigcup_{i=1}^k (X_i \cup Y_i),
  \]
  We first claim that $\{X_1, \dots, X_k, Y_1, \dots, Y_k, L', R'\}$ forms a partition of $V(H)$. The sets $X_1,\dots,X_k$ are clearly pairwise-disjoint, and so are $Y_1,\dots,Y_k$. Also, all of these sets are disjoint from $L',R'$ by definition.
  So we only need to check $X_i$ and $Y_j$ are disjoint for every pair $1 \leq i,j \leq k$.
  Suppose for contradiction that there exists $u \in X_i \cap Y_j$ for some $1 \le i, j \le k$.
  Then $i \equiv j \pmod 2$, because otherwise $X_i,Y_j$ are contained in different parts of the bipartition $L,R$.
  By the definition of $X_i$ and $Y_j$, $H'$ has a path 
  $x = x_1, x_2, \dots, x_{i}=u$ and a path $y = y_1,y_2,\dots,y_{j}=u$.
  Then, $x=x_1, x_2,\dots,x_i=u=y_j,y_{j-1},\dots,y_1,y,x$ forms a closed walk of length $i+j-1$, which is odd as $i \equiv j \pmod 2$.
  Hence, $\oddgirth(H) \le 2k-1$, contradicting our assumption. 

 \begin{figure}
    \centering
    \begin{tikzpicture}[scale = 1]

    \coordinate (x1) at (0.5,1);
    \coordinate (y1) at (-0.5,1);
    \coordinate (x2) at (1,0);
    \coordinate (y2) at (-1,0);
    \coordinate (L') at (0,-1);
    \coordinate (R') at (0,-2);

    \draw (x1) node[fill=black,circle,minimum size=2pt,inner sep=0pt] {};
    \draw (y1) node[fill=black,circle,minimum size=2pt,inner sep=0pt] {};
    \draw (x2) ellipse (0.5 and 0.3);
    \draw (y2) ellipse (0.5 and 0.3);

    \draw (L') ellipse (0.5 and 0.3);
    \draw (R') ellipse (0.5 and 0.3);

    \draw (x1) node[above] {\footnotesize $x$};
    \draw (y1) node[above] {\footnotesize $y$};

    \draw (x2) node {\footnotesize $X_2$};
    \draw (y2) node {\footnotesize $Y_2$};
    \draw (L') node {\footnotesize $L'$};
    \draw (R') node {\footnotesize $R'$};

    \draw[thick] (x1) -- (y1);

    \coordinate (X21) at ($(x2) + ({0.5*cos(100)},{0.3*sin(100)})$);
    \draw[thick] (x1) -- (X21);

    \coordinate (Y21) at ($(y2) + ({0.5*cos(80)},{0.3*sin(80)})$);
    \draw[thick] (y1) -- (Y21);

    \coordinate (X22) at ($(x2) + ({0.5*cos(270)},{0.3*sin(270)})$);

    \coordinate (Y22) at ($(y2) + ({0.5*cos(270)},{0.3*sin(270)})$);

    \coordinate (L'1) at ($(L') + ({0.5*cos(60)},{0.3*sin(60)})$);
    \coordinate (L'2) at ($(L') + ({0.5*cos(120)},{0.3*sin(120)})$);

    \draw[thick] (X22) -- (L'1);
    \draw[thick] (Y22) -- (L'2);

    \coordinate (L'1) at ($(L') + ({0.5*cos(-90)},{0.3*sin(-90)})$);

    \coordinate (R'1) at ($(R') + ({0.5*cos(90)},{0.3*sin(90)})$);

    \draw[thick] (L'1) -- (R'1);
    \end{tikzpicture}
    \hspace{2cm}
    \begin{tikzpicture}[scale = 1]

    \coordinate (x1) at (0.5,1);
    \coordinate (y1) at (-0.5,1);
    \coordinate (x2) at (1,0);
    \coordinate (y2) at (-1,0);
    \coordinate (x3) at (1,-1);
    \coordinate (y3) at (-1,-1);
    \coordinate (L') at (0,-2);
    \coordinate (R') at (0,-3);

    \draw (x1) node[fill=black,circle,minimum size=2pt,inner sep=0pt] {};
    \draw (y1) node[fill=black,circle,minimum size=2pt,inner sep=0pt] {};
    \draw (x2) ellipse (0.5 and 0.3);
    \draw (y2) ellipse (0.5 and 0.3);
    \draw (x3) ellipse (0.5 and 0.3);
    \draw (y3) ellipse (0.5 and 0.3);

    \draw (L') ellipse (0.5 and 0.3);
    \draw (R') ellipse (0.5 and 0.3);

    \draw (x1) node[above] {\footnotesize $x$};
    \draw (y1) node[above] {\footnotesize $y$};

    \draw (x2) node {\footnotesize $X_2$};
    \draw (y2) node {\footnotesize $Y_2$};
    \draw (x3) node {\footnotesize $X_3$};
    \draw (y3) node {\footnotesize $Y_3$};
    \draw (L') node {\footnotesize $R'$};
    \draw (R') node {\footnotesize $L'$};

    \draw[thick] (x1) -- (y1);

    \coordinate (X21) at ($(x2) + ({0.5*cos(100)},{0.3*sin(100)})$);
    \draw[thick] (x1) -- (X21);

    \coordinate (Y21) at ($(y2) + ({0.5*cos(80)},{0.3*sin(80)})$);
    \draw[thick] (y1) -- (Y21);

    \coordinate (X22) at ($(x2) + ({0.5*cos(-90)},{0.3*sin(-90)})$);
    \coordinate (X31) at ($(x3) + ({0.5*cos(90)},{0.3*sin(90)})$);
    
    \draw[thick] (X22) -- (X31);

    \coordinate (Y23) at ($(y2) + ({0.5*cos(-90)},{0.3*sin(-90)})$);
    \coordinate (Y31) at ($(y3) + ({0.5*cos(90)},{0.3*sin(90)})$);
    
    \draw[thick] (Y22) -- (Y31);

    \coordinate (X32) at ($(x3) + ({0.5*cos(270)},{0.3*sin(270)})$);

    \coordinate (Y32) at ($(y3) + ({0.5*cos(270)},{0.3*sin(270)})$);

    \coordinate (L'1) at ($(L') + ({0.5*cos(60)},{0.3*sin(60)})$);
    \coordinate (L'2) at ($(L') + ({0.5*cos(120)},{0.3*sin(120)})$);

    \draw[thick] (X32) -- (L'1);
    \draw[thick] (Y32) -- (L'2);

    \coordinate (L'1) at ($(L') + ({0.5*cos(-90)},{0.3*sin(-90)})$);

    \coordinate (R'1) at ($(R') + ({0.5*cos(90)},{0.3*sin(90)})$);

    \draw[thick] (L'1) -- (R'1);
    \end{tikzpicture}
    \caption{Proof of \Cref{lem:chi3-decomposition}, $k=2$ (left) and $k=3$ (right). Edges indicate bipartite graphs where edges can be present.\label{fig:structure of H}}
  \end{figure}
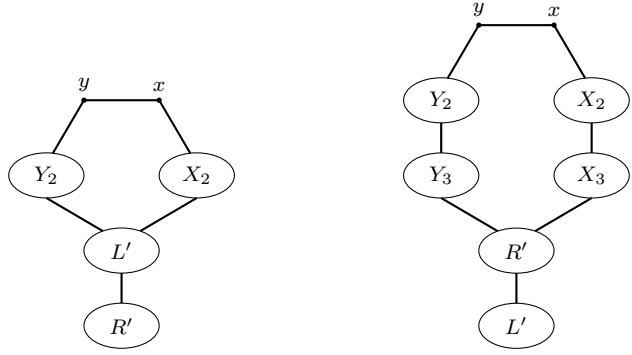

By definition, there are no edges between $X_i$ and $X_j$ for $j-i \geq 2$, and similarly for $Y_i,Y_j$. 
Also, there are no edges between $L' \cup R'$ and $\bigcup_{i=1}^{k-1}(X_i \cup Y_i)$ because the vertices in $L' \cup R'$ are at distance more than $k$ to $x,y$. Moreover, if $k$ is even then there are no edges between $X_k \cup Y_k$ and $R'$, and if $k$ is odd then there are no edges between $X_k \cup Y_k$ and $L'$. 
Next, we show that there are no edges between $X_i$ and $Y_j$ for any $1 \leq i,j \leq k$ except $(i,j) = (1,1)$.
Indeed, if $i=j$ then $e(X_i,Y_j) = 0$ because $X_i,Y_j$ are on the same side of the bipartition $L,R$. So suppose that $i \neq j$, say $i < j$, and assume by contradiction that there is an edge $uv$ with $u \in X_i, v \in Y_j$. Then $v$ is at distance at most $i+1 \leq k$ from $x$, implying that $Y_j$ intersects $X_1 \cup \dots \cup X_{i+1}$, a contradiction. 

Finally, we define the partition $A_1,\dots,A_{2k+1}$ that satisfies the assertion of the second item. 
If $k$ is even then take $A_1,\dots,A_{2k+1}$ to be 
$X_1,Y_1,\dots,Y_{k-1},Y_k \cup R',L',X_k,\dots,X_2$, and if $k$ is odd then take 
$A_1,\dots,A_{2k+1}$ to be 
$X_1,Y_1,\dots,Y_{k-1},Y_k \cup L',R',X_k,\dots,X_2$. 
See \Cref{fig:structure of H} for an illustration. By the above, in both cases it holds that $E(H) \subseteq \ecycle{2k+1}$, as required.
\end{proof}

For vertex $u \in V(G)$, denote by $N_G(u)$ the neighborhood of $u$ and let $\deg_G(u) = \abs{N_G(u)}$.
For vertices $u,v \in V(G)$, denote by $N_G(u,v)$ the common neighborhood of $u,v$ and let $\deg_G(u,v) = |N_G(u,v)|$.
\begin{lemma} \label{lem:ab-H-codegree}
  Let $H$ be a graph on $h$ vertices such that $\chi(H) = 3$ and $H$ contains a critical edge $xy$.
  Let $G$ be a graph on $n$ vertices with $\delta(G) \ge \alpha n$. 
  Let $ab \in E(G)$ such that $\deg_G(a,b) \ge \alpha n$.
  Then, there are at least $\poly(\alpha) n^{h-2}$ copies of $H$ in $G$ mapping $xy \in E(H)$ to $ab \in E(G)$.
\end{lemma}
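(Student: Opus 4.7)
The plan is to combine the first item of \Cref{lem:chi3-decomposition} with a single application of the blow-up counting lemma \Cref{lem:blowup-count}. First I would invoke \Cref{lem:chi3-decomposition} to obtain a partition $V(H) = \{x\} \cupdot \{y\} \cupdot A_3 \cupdot B$ in which every edge of $H$ is either the critical edge $xy$, an edge between $A_3$ and $\{x,y\}$, or an edge between $A_3$ and $B$. Write $s := |A_3|$ and $t := |B|$, so that $s+t = h-2$. I then observe that, having fixed $x \mapsto a$ and $y \mapsto b$, if I map $A_3$ into $V_1 := N_G(a,b)$ and $B$ into $V_2 := V(G)$, then all of the ``triangle-type'' edges $xy$, $xA_3$, $yA_3$ of $H$ are automatically realised; all that remains is to realise the bipartite subgraph $H[A_3, B]$ between the images of $A_3$ and $B$.

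Since $H[A_3, B]$ is a subgraph of $K_{s,t}$, it suffices to exhibit $\poly(\alpha)\cdot n^{h-2}$ labelled copies of $K_{s,t}$ in $G$ with $s$-side in $V_1$ and $t$-side in $V_2$. For this I would first count ordered pairs $(u,v) \in V_1 \times V_2$ with $uv \in E(G)$: since $|V_1| \ge \alpha n$ and every $u \in V_1$ has $\deg_G(u) \ge \alpha n$, there are at least $\alpha^2 n^2$ such pairs. Feeding this into \Cref{lem:blowup-count} with the host graph $K_2$, sets $V_1, V_2$, and blow-up sizes $s, t$ produces the desired $\poly(\alpha)\cdot n^{s+t} = \poly(\alpha)\cdot n^{h-2}$ labelled copies of $K_{s,t}$. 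Each such copy extends via $x\mapsto a$, $y\mapsto b$ to an injective labelled copy of $H$ in $G$ sending $xy$ to $ab$; the $O(n^{h-3})$ copies whose vertex set meets $\{a,b\}$ are a lower-order loss that I would simply discard.

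I do not foresee a real obstacle: the chromatic content of the lemma is packaged entirely into \Cref{lem:chi3-decomposition}, and after that the task reduces to a textbook bipartite counting argument that is already available as \Cref{lem:blowup-count}. The only small point of care is that the vertices of $V_1 = N_G(a,b)$ inherit the global minimum-degree bound $\alpha n$ in $G$, which is exactly what the edge-count $\alpha^2 n^2$ needs, so no further hypothesis on the pair $(a,b)$ beyond $\deg_G(a,b) \ge \alpha n$ is required.
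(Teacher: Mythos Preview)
Your proposal is correct and follows essentially the same approach as the paper's proof: invoke the first item of \Cref{lem:chi3-decomposition}, count edges from $N_G(a,b)$ using the minimum-degree hypothesis, and apply \Cref{lem:blowup-count} with $K_2$ and blow-up sizes $s=|A_3|,\,t=|B|$. The only cosmetic difference is that the paper takes $V_2 = V(G)\setminus\{a,b\}$ before applying \Cref{lem:blowup-count}, whereas you take $V_2 = V(G)$ and discard the $O(n^{h-3})$ copies meeting $\{a,b\}$ afterwards; both are fine.
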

\begin{proof}
  By the first item of \cref{lem:chi3-decomposition}, there is a partition $V(H) = A_1 \cupdot A_2 \cupdot A_3 \cupdot B$ such that $A_1=\{x\}, A_2=\{y\}$ and $E(H) \subseteq  (A_3 \times B) \cup \ecycle{3}$.
  Let $s = \abs{A_3}$ and $t = \abs{B}$.
  Each $u \in N_G(a,b)$ has at least $\alpha n - 2 \geq \frac{\alpha n}{2}$ neighbors not equal to $a,b$. 
  Hence, there are at least $\frac{1}{2} \cdot |N_G(a,b)| \cdot \frac{\alpha n}{2} \geq \frac{\alpha^2 n^2}{4}$ edges $uv$ with $u \in N_G(a,b)$ and $v \notin \{a,b\}$.
  Applying \cref{lem:blowup-count} with $H=K_2$, 
  $V_1 = N_G(a,b)$ and $V_2 = V(G) \backslash \{a,b\}$, we see that there are $\poly(\alpha) n^{s+t}$ pairs of disjoint sets $(S, T)$ such that $\abs{S} = s, \abs{T} = t$,
  $S \subseteq N_G(a,b)$, $a,b \notin T$, and $S,T$ form a complete bipartite graph in $G$.
  Given any such pair, it is safe to map $x$ to $a$, $y$ to $b$, $A_3$ to $S$ and $B$ to $T$ to obtain an $H$-copy.
  Hence, $G$ contains at least $\poly(\alpha) n^{s+t}=\poly(\alpha)n^{h-2}$ copies of $H$ mapping $xy$ to $ab$. 
\end{proof}

\begin{lemma} \label{lem:ab-H-general}
  Let $H$ be a graph on $h$ vertices such that $\chi(H) = 3$, $H$ contains a critical edge $xy$, and $\oddgirth(H) \ge 5$.
  Let $G$ be a graph on $n$ vertices, let $ab \in E(G)$, and suppose that there exists $A \subset N_G(a)$ and $B \subset N_G(b)$ such that $\abs{A}, \abs{B} \ge \alpha n$ and $\abs{N_{G}(a',b')} \ge \alpha n$ for all distinct $a' \in A$ and $b' \in B$. Then there are at least $\poly(\alpha) n^{h-2}$ copies of $H$ in $G$ mapping $xy \in E(H)$ to $ab \in E(G)$.
\end{lemma}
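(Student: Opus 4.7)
The plan is to mirror the proof of \cref{lem:ab-H-codegree}, using the second item of \cref{lem:chi3-decomposition} instead of the first to take advantage of the stronger assumption $\oddgirth(H) \ge 5$. Applying that item with $k = 2$ produces a partition $V(H) = A_1 \cupdot A_2 \cupdot A_3 \cupdot A_4 \cupdot A_5$ with $A_1 = \{x\}$, $A_2 = \{y\}$, and $E(H) \subseteq \ecycle{5}$. Writing $s_i = \abs{A_i}$ for $i = 3, 4, 5$, so that $s_3 + s_4 + s_5 = h - 2$, it suffices to produce $\poly(\alpha)\, n^{h-2}$ triples of pairwise-disjoint subsets $(S_3, S_4, S_5)$ of $V(G) \setminus \{a, b\}$ with $\abs{S_i} = s_i$, $S_3 \subseteq N_G(b)$, $S_5 \subseteq N_G(a)$, such that both $(S_3, S_4)$ and $(S_4, S_5)$ span complete bipartite graphs in $G$. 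Any such triple, together with $x \mapsto a$ and $y \mapsto b$, gives a labeled copy of $H$ mapping $xy$ to $ab$.

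Next I would count ``cherries'' $u$--$v$--$w$ in $G$ with $u \in B \setminus \{a\}$, $w \in A \setminus \{b\}$, $u \ne w$, and $v \in N_G(u, w) \setminus \{a, b, u, w\}$. By hypothesis, each of the at least $(\alpha n - 1)^2 - \abs{A \cap B} \ge \tfrac{1}{2}\alpha^2 n^2$ distinct pairs $(u, w) \in (B\setminus\{a\}) \times (A\setminus\{b\})$ (for large enough $n$) admits at least $\alpha n - 4$ valid middle vertices $v$, so the total number of cherries is at least $\tfrac{1}{4}\alpha^3 n^3$ for $n$ sufficiently large. Equivalently, $G$ contains $\Omega(\alpha^3)\, n^3$ labeled copies of the path $P_3$ whose endpoints lie in $V_1 := B \setminus \{a\}$ and $V_3 := A \setminus \{b\}$ respectively, and whose middle vertex lies in $V_2 := V(G) \setminus \{a, b\}$.

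Finally, I would apply \cref{lem:blowup-count} with $H = P_3$, blow-up sizes $(s_3, s_4, s_5)$, and target sets $V_1, V_2, V_3$ as above, obtaining $\poly(\alpha)\, n^{s_3 + s_4 + s_5} = \poly(\alpha)\, n^{h-2}$ copies of the blow-up $P_3[s_3, s_4, s_5]$ with blocks in the prescribed target sets. Each such blow-up is precisely a triple $(S_3, S_4, S_5)$ of the type required above: disjointness from $\{a, b\}$ is built into the target sets, mutual disjointness of the three blocks is part of the blow-up definition, and the complete bipartite structure between $(S_3, S_4)$ and $(S_4, S_5)$ is the blow-up's edge set. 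Each such triple induces a labeled copy of $H$ mapping $xy \mapsto ab$ (in fact $s_3!\, s_4!\, s_5! = O(1)$ of them), and different triples yield different $H$-copies, producing the desired $\poly(\alpha)\, n^{h-2}$ count.

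The only step needing any care is the cherry count, where one must absorb the $O(n^2)$ losses coming from the diagonal $A \cap B$ and from forbidding $v \in \{a, b, u, w\}$ into the main $\Omega(\alpha^3 n^3)$ term; both are routine since $\abs{A}, \abs{B} \ge \alpha n$ and $\abs{N_G(u, w)} \ge \alpha n$. Everything else is a direct invocation of the decomposition from \cref{lem:chi3-decomposition} and the blow-up counting of \cref{lem:blowup-count}, in close parallel with the proof of \cref{lem:ab-H-codegree}.
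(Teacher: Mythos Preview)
Your proposal is correct and follows essentially the same argument as the paper: apply the second item of \cref{lem:chi3-decomposition} with $k=2$, count cherries with endpoints in $A$ and $B$ and middle vertex avoiding $\{a,b\}$, and then invoke \cref{lem:blowup-count} on the path $P_3=K_{2,1}$ to obtain the required blown-up triples. The paper writes the auxiliary graph as $K_{2,1}$ rather than $P_3$ and is slightly less explicit about removing $a,b$ from the target sets, but otherwise the two proofs are identical.
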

\begin{proof}
  By \cref{lem:chi3-decomposition} (using $\oddgirth(H) \ge 5$), there exists a partition $V(H) = A_1 \cupdot \dots \cupdot A_{5}$ such that ${A_1} = \{x\}, {A_2} = \{y\}$, and $E(H) \subseteq \ecycle{5}$.  Put $s_i=\abs{A_i}$ for $i \in [5]$.
  
  There are at least $(|A||B|- \abs{A})/2 \geq \alpha^2n^2/3$ pairs $\{a',b'\}$ of distinct vertices with $a' \in A, b' \in B$ (the factor of 2 is due to the fact that each pair in $A \cap B$ is counted twice). Each such pair $a',b'$ has at least $\alpha n - 2 \geq \alpha n/2$ common neighbors $c' \notin \{a,b\}$, by assumption.
  Therefore, there are at least $\frac{\alpha^2n^2}{3}\cdot \frac{\alpha n}{2}=\frac{\alpha^3n^3}{6}$ triples $(a',b',c')$ such that $a' \in A, b' \in B$, and $c'\neq a,b$ is a common neighbor of $a',b'$.
  By \cref{lem:blowup-count} with $H=K_{2,1}$ and $V_1=A,V_2=B,V_3=V(G)\backslash\{a,b\}$, 
  there are at least $\poly(\alpha) n^{s_3+s_4+s_5}$ corresponding copies of $K_{2,1}[s_3,s_5,s_4]$, i.e., triples of disjoint sets $(R,S,T)$ such that $R \subseteq A$, $S \subseteq B$, $a,b\notin T$, $\abs{R}=s_5, \abs{S}=s_3, \abs{T}=s_4$, and $(R,T)$ and $(S,T)$ form complete bipartite graphs in $G$.
  Given any such triple, we can safely map $A_1=\{x\}$ to $a$, $A_2=\{y\}$ to $b$, $A_5$ to $R$, $A_3$ to $S$, and $A_4$ to $T$ to obtain a copy of $H$.
  Thus, there are at least $\poly(\alpha) n^{s_3+s_4+s_5}=\poly(\alpha)n^{h-2}$ copies of $H$ mapping $xy$ to $ab$. 
  \end{proof}
\noindent
    In the following theorem we prove the upper bound in the second case of \Cref{thm:linear threshold}.
    \begin{theorem} \label{thm:3 chromatic with triangle}
  Let $H$ be a graph such that $\chi(H) = 3$, $H$ has a critical edge $xy$, and $H$ contains a triangle.
  Then, $\linearthres(H) \le \frac{1}{3}$.
\end{theorem}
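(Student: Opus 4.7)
The plan is to use the triangle inside $H$ to produce $\varepsilon n^2$ edge-disjoint triangles in $G$, then extract many edges of $G$ with large codegree from them, and finally invoke \cref{lem:ab-H-codegree} through each such edge. Fix $\alpha > 0$ and let $G$ be an $n$-vertex graph with $\delta(G) \geq (\tfrac{1}{3} + \alpha)n$ and $\varepsilon n^2$ edge-disjoint copies of $H$. Since $H$ contains a triangle, selecting one triangle from each $H$-copy yields $\varepsilon n^2$ edge-disjoint triangles in $G$, inheriting edge-disjointness from the $H$-copies.

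For every triangle $abc$ in $G$, inclusion-exclusion applied to $N(a) \cup N(b) \cup N(c) \subseteq V(G)$ gives
\[
\deg_G(a,b) + \deg_G(b,c) + \deg_G(a,c) \geq \deg_G(a) + \deg_G(b) + \deg_G(c) - n \geq 3\alpha n,
\]
so at least one edge of every triangle in $G$ has codegree at least $\alpha n$. Designating one such ``heavy'' edge in each of our edge-disjoint triangles defines an injection (two edge-disjoint triangles cannot share an edge), so $G$ contains a set $E^*$ of at least $\varepsilon n^2$ edges $ab$ with $\deg_G(a,b) \geq \alpha n$.

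By \cref{lem:ab-H-codegree}, each $ab \in E^*$ is contained in at least $\poly(\alpha) n^{h-2}$ labeled copies of $H$ whose critical edge $xy$ maps to $ab$. Two distinct edges of $E^*$ yield disjoint families of such copies (the unordered image $\{\phi(x),\phi(y)\}$ identifies the edge), so summing over $E^*$ produces at least $|E^*| \cdot \poly(\alpha)\, n^{h-2} \geq \mu(\alpha)\, \varepsilon\, n^h$ labeled copies of $H$ in $G$. Hence $\delta(\varepsilon, H;\tfrac{1}{3}+\alpha) \geq \mu(\alpha)\varepsilon$ for every $\alpha > 0$, which gives $\linearthres(H) \leq \tfrac{1}{3}$ after letting $\alpha \to 0$.

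The delicate point is obtaining \emph{linear} (not merely polynomial) dependence on $\varepsilon$. This hinges on the fact that the three-vertex inclusion-exclusion bound supplies a codegree threshold of $\alpha n$ that depends only on $\alpha$ and is independent of $\varepsilon$; a single application of \cref{lem:ab-H-codegree} at this constant threshold then contributes an $\varepsilon$-free factor $\poly(\alpha)$, and all $\varepsilon$-dependence enters only through $|E^*| \geq \varepsilon n^2$. Had the codegree lower bound scaled with $\varepsilon$, the lemma would only have yielded a $\poly(\varepsilon)$ bound per edge, and the argument would collapse to the polynomial regime.
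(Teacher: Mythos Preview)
Your proof is correct and follows essentially the same approach as the paper: extract a triangle from each edge-disjoint $H$-copy, use the degree sum to find an edge of large codegree in each triangle, and apply \cref{lem:ab-H-codegree} through these (pairwise distinct) heavy edges. The paper states the codegree step as ``since $\deg_G(u)+\deg_G(v)+\deg_G(w) \ge (1+3\alpha)n$, two of $u,v,w$ have at least $\alpha n$ common neighbors,'' which is exactly your inclusion--exclusion inequality, and then sums over the $\varepsilon n^2$ distinct edges just as you do.
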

\begin{proof}
  Write $h = v(H)$.
  Fix an arbitrary $\alpha > 0$, and
  let $G$ be an $n$-vertex graph with minimum degree $\delta(G) \ge (\frac{1}{3}+\alpha) n$ and with a collection $\mathcal{C} = \{H_1,\dots,H_m\}$ of $m := \varepsilon n^2$ edge-disjoint copies of $H$. 
  For each $i=1,\dots,m$, there exist $u,v,w \in V(H_i)$ forming a triangle (because $H$ contains a triangle).
  As $\deg_G(u)+\deg_G(v)+\deg_G(w) \ge 3\delta(G) \ge (1+3\alpha) n$, two of $u,v,w$ have at least $\alpha n$ common neighbors. 
  We denote these two vertices by $a_i$ and $b_i$.
  By \cref{lem:ab-H-codegree}, $G$ has at least $\poly(\alpha) n^{h-2}$ copies of $H$ which map $xy$ to $a_ib_i$. The edges $a_1b_1,\dots,a_mb_m$ are distinct because $H_1,\dots,H_m$ are edge-disjoint.
  Hence, summing over all $i = 1,\dots,m$, we see that $G$ contains at least $\varepsilon n^2 \cdot \poly(\alpha) n^{h-2} = \poly(\alpha) \varepsilon n^h$ 
  copies of $H$. This proves that  
  $\linearthres(H) \le \frac{1}{3} + \alpha$, and taking $\alpha \rightarrow 0$ gives $\linearthres(H) \le \frac{1}{3}$.
\end{proof}

In what follows, we need the following very well-known observation, originating in the work of Andr{\'a}sfai, Erd\H{o}s and S{\'o}s, see~\cite[Remark 1.6]{AES74}.
\begin{lemma}
\label{lem:shortest odd cycle}
    If $\delta(G) > \frac{2}{2k+1}n$ and $\oddgirth(G) \geq 2k+1$ for $k \ge 2$, then $G$ is bipartite.
\end{lemma}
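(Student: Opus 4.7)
The plan is to proceed by contradiction: assume $G$ is not bipartite, so $G$ contains an odd cycle, and let $C = v_1 v_2 \dots v_{2\ell+1}$ be a shortest odd cycle, where $2\ell + 1 \geq \oddgirth(G) \geq 2k+1$. The goal is to double-count edges incident to $V(C)$ and derive an inequality that contradicts the minimum-degree hypothesis.

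First I would verify that $C$ is induced. Any chord $v_iv_j$ splits $C$ into two cycles of lengths summing to $2\ell + 3$, which is odd, so one of them is an odd cycle of length at most $\ell + 2 < 2\ell + 1$ (using $\ell \geq 2$, which follows from $k \geq 2$), contradicting the minimality of $C$. Hence $C$ is an induced cycle, so every vertex of $C$ has exactly two neighbors on $C$.

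Next I would show that any $v \notin V(C)$ has at most two neighbors on $C$, and if it has exactly two, they must be consecutive. Concretely, suppose $v$ is adjacent to $v_i,v_j$ at cyclic distance $d$ on $C$ with $2 \leq d \leq \ell$. Then $v$ together with the two arcs from $v_i$ to $v_j$ yields two cycles of lengths $d+1$ and $2\ell+2-d$, whose sum is odd. A short case analysis on the parity of $d$ shows that the odd one among them has length at most $\max(d+1, 2\ell+2-d) = 2\ell + 2 - d \leq 2\ell$ (because $d \geq 2$), giving a shorter odd cycle — contradiction. So only $d = 1$ is possible, proving the claim.

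Finally, combining the two structural facts:
\[
(2\ell+1)\,\delta(G) \;\leq\; \sum_{v \in V(C)} \deg_G(v) \;=\; 2 \cdot |E(C)| + \sum_{u \notin V(C)} |N_G(u) \cap V(C)| \;\leq\; 2(2\ell+1) + 2(n - 2\ell - 1) \;=\; 2n.
\]
This yields $\delta(G) \leq \frac{2n}{2\ell+1} \leq \frac{2n}{2k+1}$, contradicting the hypothesis $\delta(G) > \frac{2}{2k+1}n$. The main obstacle is the parity case analysis in the second step, where one must check carefully that whenever $d \geq 2$ the shorter of the two resulting cycles is odd and of length strictly less than $2\ell+1$; this is where the assumption $k \geq 2$ (so $\ell \geq 2$) is used.
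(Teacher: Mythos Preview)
Your approach is the same as the paper's: both take a shortest odd cycle $C$ of length $2\ell+1\ge 2k+1$ and derive a contradiction from a vertex having three neighbours on $C$. The paper argues directly that $\sum_{x\in C}\deg(x)>2n$ forces such a vertex and then finds two of its neighbours at distance $\neq 2$ on $C$; you phrase it contrapositively via the counting inequality. The content is identical.

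There is, however, an arithmetic slip in your second step. If $v\notin V(C)$ has neighbours $v_i,v_j$ at cyclic distance $d$ on $C$, the two cycles through $v$ have lengths $d+2$ and $2\ell+3-d$ (each arc plus the \emph{two} edges $vv_i$ and $vv_j$), not $d+1$ and $2\ell+2-d$. With the correct lengths, $d=2$ does \emph{not} produce a shorter odd cycle (the odd one has length exactly $2\ell+1$), whereas $d=1$ \emph{does} (it gives a triangle, and $2\ell+1\ge 5$). Hence your intermediate claim ``if $v$ has two neighbours on $C$ they must be consecutive'' is false; the correct statement is that such neighbours must be at distance exactly~$2$. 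This does not ultimately break the argument---three vertices on $C_{2\ell+1}$ cannot be pairwise at distance~$2$, so ``at most two neighbours on $C$'' still holds and your final inequality $\sum_{v\in C}\deg_G(v)\le 2n$ goes through---but the justification needs this fix.
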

\begin{proof}
Suppose by contradiction that $G$ is not bipartite and take a shortest odd cycle $C$ in $G$, so $|C| \geq 2k+1$. 
As $\sum_{x \in C}\deg(x) \geq (2k+1) \delta(G)> 2n$,
there exists a vertex $v \notin C$ with at least 3 neighbors on $C$. 
Then there are two neighbors $x,y \in C$ of $v$ such that the distance of $x,y$ along $C$ is not equal to $2$. Then by taking the odd path between $x,y$ along $C$ and adding the edges $vx,vy$, we get a shorter odd cycle, a contradiction. 
\end{proof}
\noindent
We will also use the following result of Letzter and Snyder, see~\cite[Corollary 32]{LS19}.
\begin{theorem}[\cite{LS19}]\label{thm:LS}
    Let $G$ be a $\{C_3,C_5\}$-free graph on $n$ vertices with $\delta(G) > \frac{n}{4}$. Then $G$ is homomorphic to $C_7$. 
\end{theorem}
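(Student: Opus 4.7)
The plan is to perform a case split on the odd-girth of $G$.

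Since $G$ is $\{C_3,C_5\}$-free, either $G$ is bipartite (in which case $G$ is homomorphic to $K_2 \subset C_7$ and we are done) or the odd-girth of $G$ is at least $7$. If the odd-girth of $G$ is at least $9$, I would apply \Cref{lem:shortest odd cycle} with $k=4$: since $\delta(G) > \frac{n}{4} > \frac{2n}{9}$, this forces $G$ to be bipartite, a contradiction. Hence we may assume that the odd-girth of $G$ is exactly $7$.

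Now fix a shortest odd cycle $C = v_0 v_1 \dots v_6$ of $G$. The crucial first structural observation is: if $u \in V(G) \setminus V(C)$ is adjacent to two distinct vertices $v_i,v_j \in V(C)$, then the distance along $C$ between $v_i$ and $v_j$ must equal exactly $2$. Indeed, distance $1$ would create the triangle $u v_i v_j$, while distance $3$ would close a $C_5$ obtained by concatenating the path $v_i u v_j$ with the shorter arc of $C$ between $v_i$ and $v_j$; both possibilities contradict the hypothesis on $G$. This severely restricts the possible patterns of $N(u)\cap V(C)$ and suggests the natural candidate labeling $\phi : V(G) \to \Z/7\Z$ (identifying $V(C_7)$ with $\Z/7\Z$): set $\phi(v_i) := i$, and extend $\phi$ to the rest of $V(G)$ using distances to the vertices of $C$, or equivalently by propagating labels along edges outward from $C$.

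The remaining two steps are (i) to show that $\phi$ is well-defined on every vertex of $V(G)$, and (ii) to show that $\phi$ is a graph homomorphism, meaning that for every edge $uv$ of $G$ the labels $\phi(u),\phi(v)$ differ by $\pm 1$ modulo $7$. Both steps proceed by contradiction, using the minimum degree condition $\delta(G) > n/4$ to find enough common neighbors to force either a triangle or a $C_5$.

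\textbf{The main obstacle} will be step (ii), where the threshold $n/4$ is used tightly. A ``bad'' edge $uv$ (with labels differing by more than $1$ modulo $7$) must, via the minimum degree condition, have many common neighbors; combining these with paths to $C$ should yield either a short forbidden odd cycle or a vertex whose neighborhood on $C$ violates the distance-$2$ restriction established above. Making this quantitative is delicate, and the sharpness of the threshold $1/4$ -- witnessed by blow-ups of Andr\'asfai-type graphs -- indicates that no slack is available in this argument.
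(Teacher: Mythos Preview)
The paper does not prove \Cref{thm:LS} at all; it is quoted verbatim as \cite[Corollary 32]{LS19} and used as a black box. So there is no ``paper's own proof'' to compare against, and your proposal is an attempt to supply what the paper deliberately outsources.

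As a proof, your sketch has a genuine gap at exactly the point you flag. The reduction to odd-girth $7$ via \Cref{lem:shortest odd cycle} is fine, and the observation that any vertex outside $C$ can only see vertices at distance $2$ along $C$ is correct. But after that, your definition of $\phi$ (``using distances to the vertices of $C$, or equivalently by propagating labels along edges outward from $C$'') is not well-defined: a vertex may reach $C$ along several shortest paths that land at different $v_i$'s, and you have not shown these yield consistent labels; nor have you handled vertices in components that do not meet $C$ (though those are bipartite and can be coloured separately). More seriously, steps (i) and (ii) are asserted rather than argued. Saying that a bad edge ``must, via the minimum degree condition, have many common neighbors'' is already false in general (two vertices of degree $>n/4$ need have no common neighbour), and the Letzter--Snyder argument does not proceed by finding common neighbours of a bad edge. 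Their proof is a substantial structural analysis that partitions $V(G)$ according to neighbourhood patterns on a fixed $7$-cycle, uses the degree condition to control the sizes of the resulting classes, and then rules out the remaining bad adjacencies by exhibiting explicit $C_3$'s or $C_5$'s; the threshold $n/4$ enters through these counting inequalities, not through a single pigeonhole on one edge. Your outline does not contain the idea needed to make this work, so as written it is a plan with the hard part left blank rather than a proof.
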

\noindent
We can now finally prove the upper bound in the last case of \Cref{thm:linear threshold}. 

\begin{theorem}
  Let $H$ be a graph such that $\chi(H) = 3$, $H$ contains critical edge $xy$, and $\oddgirth(H) \ge 5$.
  Then $\linearthres(H) \le \frac{1}{4}$.
\end{theorem}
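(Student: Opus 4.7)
The approach is to follow \Cref{thm:3 chromatic with triangle} as closely as possible, with shortest odd cycles (of length $\oddgirth(H)=2\ell+1\ge 5$) playing the role of triangles and \Cref{lem:ab-H-general} playing the role of \Cref{lem:ab-H-codegree}. Fix $\alpha>0$, let $G$ have $\delta(G)\ge(\tfrac14+\alpha)n$, and let $H_1,\dots,H_m$ be $m=\varepsilon n^2$ edge-disjoint copies of $H$ in $G$. For each $i$, write $x_iy_i$ for the image in $G$ of the critical edge $xy$; by edge-disjointness these edges are pairwise distinct. Since $xy$ is critical, $H_i-x_iy_i$ is bipartite, so every odd cycle of $H_i$ passes through $x_iy_i$; fix a shortest such odd cycle and denote five consecutive vertices of it by $z_1=x_i,z_2=y_i,z_3,z_4,z_5$. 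The aim is to produce, for each $i$, an edge $a_ib_i\in E(G)$ satisfying the hypothesis of either \Cref{lem:ab-H-codegree} or \Cref{lem:ab-H-general} with some parameter $\alpha'=\alpha'(\alpha)>0$, with the $a_ib_i$ all distinct. The corresponding lemma then yields $\poly(\alpha)\,n^{h-2}$ copies of $H$ mapping $xy$ to $a_ib_i$, and summing over $i$ produces $\poly(\alpha)\cdot \varepsilon n^h$ copies of $H$ in $G$, which gives $\linearthres(H)\le \tfrac14$ upon letting $\alpha\to 0$.

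The dichotomy is obtained from the Bonferroni-type inequality
\[
\sum_{1\le j<k\le 5}\big|N_G(z_j)\cap N_G(z_k)\big|\ge \sum_{j=1}^{5}\deg_G(z_j)-n \ge \big(\tfrac14+5\alpha\big)n.
\]
If some consecutive pair $(z_j,z_{j+1})$ on the $5$-cycle has codegree $\ge\alpha n$, then the edge $z_jz_{j+1}$ lies in $H_i$ (so distinct across $i$) and carries the high codegree required by \Cref{lem:ab-H-codegree}; set $a_ib_i:=z_jz_{j+1}$ and conclude as in the proof of \Cref{thm:3 chromatic with triangle}. Otherwise every consecutive pair has codegree less than $\alpha n$, and so by pigeonhole the Bonferroni excess is borne by the distance-$2$ pairs, yielding some such pair with codegree at least $\alpha' n$. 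In this ``low-consecutive, high-distance-$2$'' regime, the local codegree pattern around the $5$-cycle is characteristic of a blow-up of $C_5$; here I set $a_ib_i:=x_iy_i$ and aim to verify the hypothesis of \Cref{lem:ab-H-general} with parameter $\alpha''=\alpha''(\alpha)>0$ by exhibiting sets $A_i\subseteq N_G(x_i)$ and $B_i\subseteq N_G(y_i)$ of size at least $\alpha''n$ such that $|N_G(a',b')|\ge\alpha''n$ for every distinct $a'\in A_i$ and $b'\in B_i$.

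The main obstacle is this last verification, which asks for a complete bipartite $K_{\alpha''n,\alpha''n}$ inside $N_G(x_i)\times N_G(y_i)$ in the codegree graph $G^\ast$ (where $uv\in E(G^\ast)$ iff $|N_G(u,v)|\ge\alpha''n$); this does not follow from the existence of a single high-codegree distance-$2$ pair alone. My intended approach combines \Cref{thm:LS} with \Cref{lem:shortest odd cycle}: the hypothesis $\delta(G)>n/4$ forces $\oddgirth(G)\le 7$, and the local codegree pattern should force dense $C_5$-blow-ups near the cycle through $x_iy_i$. If no such bipartite structure existed for many $x_iy_i$, then \Cref{thm:LS} would push $G$ toward being homomorphic to $C_7$, which, combined with the relationship between $\oddgirth(H)$ and the odd girth of $G$ (via \Cref{lem:shortest odd cycle}), contradicts the existence of $\varepsilon n^2$ edge-disjoint copies of $H$. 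Making this sketch quantitative---in particular, extracting the sets $A_i, B_i$ for each $i$ in the second case---is the technical heart of the proof.
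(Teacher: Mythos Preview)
Your plan has a genuine gap in Case 2, and the fix you sketch does not work as stated.

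First, the gap itself. Knowing that some distance-$2$ pair, say $(z_1,z_3)$, has codegree at least $c\alpha n$ tells you essentially nothing about the bipartite codegree structure between $N_G(x_i)$ and $N_G(y_i)$ that \Cref{lem:ab-H-general} requires. That lemma needs sets $A\subseteq N_G(a)$, $B\subseteq N_G(b)$, each of linear size, such that \emph{every} pair $(a',b')\in A\times B$ has linear codegree. A single high-codegree distance-$2$ pair gives you one vertex $z_3\in N_G(y_i)$ with many common neighbours with $x_i$; it does not produce $\alpha'' n$ such vertices, and certainly not the full product condition.

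Second, the proposed rescue via \Cref{thm:LS} cannot be applied to $G$: Letzter--Snyder requires the input graph to be $\{C_3,C_5\}$-free, and your $G$ contains $\varepsilon n^2$ edge-disjoint copies of $H$, each containing an odd cycle. So you cannot invoke it on $G$ directly, and your purely local five-vertex argument gives no handle on any $\{C_3,C_5\}$-free subgraph of $G$ with high minimum degree.

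The paper's proof is global rather than local. It first runs a dichotomy on $G$: either $G$ contains $\varepsilon^{c} n^2$ edge-disjoint copies of some $C_{2j+1}$ with $j\le k$ --- in which case \Cref{lem:edge disjoint short cycle implies many Ck} together with \Cref{thm:poly threshold of odd cycle} yields $\poly(\varepsilon)\,n^{2k+1}$ copies of $C_{2k+1}$, with the exponent $c$ tuned so that \Cref{lem:blowup-count} converts this into a \emph{linear-in-$\varepsilon$} number of $H$-copies --- or else one can delete $O(\varepsilon^c n^2)$ edges and $o(n)$ vertices to obtain a $\{C_3,\dots,C_{2k+1}\}$-free subgraph $G'$ with $\delta(G')>n/4$. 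Only at this point is \Cref{thm:LS} applied, to $G'$, giving a partition of almost all of $V(G)$ into seven classes (or two, via \Cref{lem:shortest odd cycle}, when $k\ge 3$). The remainder of the argument is a case analysis on how the edges of $G$ outside this structure sit relative to the partition; the minimum-degree bounds on the partition classes are precisely what manufactures the sets $A,B$ that \Cref{lem:ab-H-general} needs. Your Bonferroni inequality on five vertices never sees this global structure, which is why Case~2 stalls.
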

\begin{proof}
  Denote $h = |V(H)|$. Write $\oddgirth(G) = 2k+1 \ge 5$.
  By the second item of \cref{lem:chi3-decomposition}, there is a partition $V(H) = A_1 \cupdot A_2 \cupdot \dots \cupdot A_{2k+1}$ such that $\abs{A_1} = \abs{A_2} = 1$, and $E(H) \subseteq \ecycle{2k+1}$.
  Denote $s_i = \abs{A_i}$ for each $i \in [2k+1]$, 
  so $H$ is a subgraph of the blow-up $C_{2k+1}[s_1,\dots,s_{2k+1}]$ of $C_{2k+1}$.
  Let $c_1=c_1(C_{2k+1},s_1,\dots,s_{2k+1})>0$ and $c_2=c_2(k)>0$ be the constants given by \cref{lem:blowup-count} and \cref{lem:edge disjoint short cycle implies many Ck}, respectively.
  According to \cref{thm:poly threshold of odd cycle}, $\polythres(C_{2k+1}) = \frac{1}{2k+1} < \frac{1}{4}$, and hence there exists a constant $c_3=c_3(k)>0$ such that if $G$ is a graph on $n$ vertices with $\delta(G) \ge \frac{n}{4} $ and at least $\varepsilon n^2$ edge-disjoint $C_{2k+1}$-copies, then $G$ contains at least $c_3 \varepsilon^{\frac{1}{c_3}} n^{2k+1}$ copies of $C_{2k+1}$.
  Set $c := c_1 \cdot \min(c_2,c_3)$.
  
  Let $\alpha > 0$ and $\varepsilon$ be small enough; it suffices to assume that $\varepsilon < \left(\frac{\alpha^2}{200k(k+2)} \right)^{{1}/{c}}$.
  Let $G$ be a graph on $n$ vertices with $\delta(G) \ge (\frac{1}{4}+\alpha) n$ which contains at least $\varepsilon n^2$ edge-disjoint copies of $H$.
  Our goal is to show that $G$ contains $\Omega_{H,\alpha}(\varepsilon n^h)$ copies of $H$.
  Suppose first that $G$ contains at least $\varepsilon^{c} n^2$ edge-disjoint copies of $C_{2\ell+1}$ for some $1 \le \ell \le k$.
  If $\ell < k$, then $G$ contains $\Omega_k(\varepsilon^{c/c_2} n^{2k+1}) = \Omega_k(\varepsilon^{c_1} n^{2k+1})$ copies of $C_{2k+1}$ by \cref{lem:edge disjoint short cycle implies many Ck} and the choice of $c_2$.
  And if $\ell = k$, then $G$ contains $\Omega_k(\varepsilon^{{c}/{c_3}} n^{2k+1}) = \Omega_k(\varepsilon^{c_1} n^{2k+1})$ copies of $C_{2k+1}$ by \cref{thm:poly threshold of odd cycle} and the choice of $c_3$.
  In either case, $G$ contains $\Omega_k(\varepsilon^{c_1} n^{2k+1})$ copies of $C_{2k+1}$.
  But then, by \cref{lem:blowup-count} (with $V_1=\cdots=V_{2k+1}=V(G)$),
  $G$ contains at least 
  $\Omega_H(\varepsilon^{c_1/c_1} n^h)=\Omega_{H}(\varepsilon n^h)$ copies of $C_{2k+1}[s_1,\dots,s_{2k+1}]$, and hence $\Omega_{H,\alpha}(\varepsilon n^h)$ copies of $H$.
  This concludes the proof of this case.
  
  From now on, assume that $G$ contains at most $\varepsilon^c n^2$ edge-disjoint $C_{2\ell+1}$-copies for every $\ell \in [k]$.
  Let $\mathcal{C}_\ell$ be a maximal collection of edge-disjoint $C_{2\ell+1}$-copies in $G$, so $\abs{\mathcal{C}_\ell} \le \varepsilon^c n^2$.
  Let $E_c$ be the set of edges which are contained in one of the cycles in $\mathcal{C}_1 \cup \dots \cup \mathcal{C}_k$. Let $S$ be the set of vertices which are incident with at least $\frac{\alpha n}{10}$ edges from $E_c$. Then
 \begin{equation}\label{eq:S}
  \abs{E_c} \le \sum_{\ell=1}^k (2\ell+1) \varepsilon^c n^2 = k(k+2) \varepsilon^c n^2 \text{ and }
    \abs{S} \le 
    \frac{2\abs{E_c}}{\alpha n/10}
    \le 
    \frac{20 k (k+2) \varepsilon^c}{\alpha} n < \frac{\alpha n}{10},
 \end{equation}
  where the last inequality holds by our assumed bound on $\varepsilon$.
  Let $G'$ be the subgraph of $G$ obtained by deleting the edges in $E_c$ and the vertices in $S$. Note that $G' \subseteq G-E_c$ is $\{C_3,C_5,\dots,C_{2k+1}\}$-free because for every $1 \leq \ell \leq k$, we removed all edges from a maximal collection of edge-disjoint $C_{2\ell+1}$-copies.
  \begin{claim}\label{claim:properties of G'}
    $|V(G')| > (1-\frac{\alpha}{10}) n$ and 
    $\delta(G') > (\frac{1}{4}+\frac{4\alpha}{5}) n$.
  \end{claim}
  \begin{proof}
    The first inequality follows from \eqref{eq:S} as $|V(G')| = n-\abs{S}$.
    Each $v \in V(G) \backslash S$ has at most $\frac{\alpha n}{10}$ incident edges from $E_c$, and at most $\abs{S} < \frac{\alpha n}{10}$ neighbors in $S$, thereby $\deg_{G'}(v) > \deg_G(v) - \frac{\alpha n}{5} \ge (\frac{1}{4}+\frac{4\alpha}{5}) n$.
    Hence, $\delta(G') > (\frac{1}{4}+\frac{4\alpha}{5}) n$.
  \end{proof}
   \begin{claim} \label{claim:shape of G'}
    $G'$ is homomorphic to $C_7$. Moreover, $G'$ is bipartite unless $k=2$.
  \end{claim}
  \begin{proof}
  Recall that $G'$ is $\{C_3,C_5,\dots,C_{2k+1}\}$-free.
    As $k \geq 2$, $G'$ is $\{C_3,C_5\}$-free. Also, $\delta(G') > \frac{n}{4} \ge \frac{|V(G')|}{4}$ by \Cref{claim:properties of G'}.
    So $G'$ is homomorphic to $C_7$ by \Cref{thm:LS}.
    If $k \ge 3$, i.e. $\oddgirth(H) \ge 7$, then $\oddgirth(G') \ge 2k+3 \ge 9$. As $\delta(G') > \frac{n}{4}$, 
    $G'$ is bipartite by \Cref{lem:shortest odd cycle}.
     \end{proof}
  \noindent
  The rest of the proof is divided into two cases based whether or not $G'$ is bipartite. These cases are handled by Propositions~\ref{prop:bipartite} and~\ref{prop:not bipartite}, respectively.
  \begin{proposition}\label{prop:bipartite}
  Suppose that $G'$ is bipartite. Then $G$ has $\Omega_{H,\alpha}(\varepsilon n^h)$ copies of $H$.  
  \end{proposition}
  \begin{proof} 
    Let $(L', R')$ be a bipartition of $G'$, so $V(G) = L' \cupdot R' \cupdot S$.
    Let $L_1 \subseteq S$ (resp. $R_1 \subseteq S$) be the set of vertices of $S$ having at most $\frac{\alpha n}{5}$ neighbors in $L'$ (resp. $R'$).
    Let $G''$ be the bipartite subgraph of $G$ induced by the bipartition $(L'',R'') := (L' \cupdot L_1, R' \cupdot R_1)$.
    Let $S'' = V(G) \backslash (L'' \cupdot R'')$, so $V(G)=L''\cupdot R'' \cupdot S''$.

    We claim that $\delta(G'') \ge (\frac{1}{4} + \frac{\alpha}{2}) n$.
    First, as $G'$ is a subgraph of $G''$, we have $\deg_{G''}(v) > (\frac{1}{4}+\frac{4\alpha}{5}) n$
    for each $v \in V(G') \subseteq V(G'')$ by \Cref{claim:properties of G'}.
    Now we consider vertices in $V(G'') \setminus V(G') = L_1 \cup R_1$. 
    Each $v \in L_1$ has at most $\abs{S} \le \frac{\alpha n}{10}$ neighbors in $S$ and at most $\frac{\alpha n}{5}$ neighbors in $L'$, by the definition of $L_1$.
    Hence, $v$ has at least $\deg_G(v) - \frac{3 \alpha}{10}n \ge (\frac{1}{4} + \frac{\alpha}{2}) n$ neighbors in $R' \subseteq V(G'')$.
    By the symmetric argument for vertices $v \in R_1$, we get that $\delta(G'') \ge (\frac{1}{4} + \frac{\alpha}{2}) n$, as required. 

    For an edge $uv \in E(G) \backslash E(G'')$, we say $uv$ is of type I if $u, v \in L''$ or $u, v \in R''$, and we say that $uv$ is of type II if $u \in S''$ or $v \in S''$. Every edge in $E(G) \backslash E(G'')$ is of type I or II.
    Since $\chi(H) = 3$ and $G''$ is bipartite, each copy of $H$ in $G$ must contain an edge of type I or an edge of type II (or both).
    As $G$ has $\varepsilon n^2$ edge-disjoint $H$-copies, $G$ contains at least $\frac{\varepsilon n^2}{2}$ edges of type I or at least $\frac{\varepsilon n^2}{2}$ edges of type II.
    We now consider these two cases separately. 
    See \cref{fig:bipartite proof} for an illustration.
    Recall that $xy \in E(H)$ denotes a critical edge of $H$. 
    \paragraph{Case 1:} {\em $G$ contains $\frac{\varepsilon n^2}{2}$ edges of type I. }
        Fix any edge $ab \in E(G)$ of type I. Without loss of generality, assume $a, b \in L''$ (the case $a,b \in R''$ is symmetric). We claim that 
        $G$ has $\poly(\alpha) n^{h-2}$ copies of $H$ mapping $xy \in E(H)$ to $ab \in E(G)$.
        If $\deg_G(a,b) \ge \frac{\alpha n}{2}$ then this holds by \cref{lem:ab-H-codegree}.
        Otherwise, $\deg_G(a,b) < \frac{\alpha n}{2}$, and thus 
        $$\abs{R''} \ge \abs{N_{G''}(a) \cup N_{G''}(b)} \ge \deg_{G''}(a) + \deg_{G''}(b) - \deg_G(a,b) > 2\delta(G'') - \frac{\alpha n}{2} 
        > \frac{n}{2},$$
        using that $\delta(G'') \geq (\frac{1}{4} + \frac{\alpha}{2})n$.
        Thus, $\abs{L''} < \frac{n}{2}$. 
        This implies that for all $a' \in N_{G''}(a), b' \in N_{G''}(b)$,
        $$\deg_{G''}(a',b') \ge 2\delta(G'') - \abs{L''} \ge \alpha n.$$
        Now, by \cref{lem:ab-H-general} (with $A = N_{G''}(a)$ and $B = N_{G''}(b)$), there are  $\poly(\alpha) n^{h-2}$ copies of $H$ mapping $xy$ to $ab$, as claimed. 
        Summing over all edges $ab$ of type I, we get $\frac{\varepsilon n^2}{2} \cdot \poly(\alpha) n^{h-2} = \poly(\alpha) \varepsilon n^{h}$ copies of $H$.
        This completes the proof in {Case 1}.

        \paragraph{Case 2:} {\em $G$ contains $\frac{\varepsilon n^2}{2}$ edges of type II. }
        Note that the number of edges of type II is trivially at most $\abs{S''} n$.
        Thus, $\abs{S''} \ge \frac{\varepsilon n}{2}$.
        Fix some $a \in S''$.
        By the definition of $L'', R''$ and $S''$, $v$ has at least $\frac{\alpha n}{5}$ neighbors in $L' \subseteq L''$ and at least $\frac{\alpha n}{5}$ neighbors in 
        $R' \subseteq R''$.
        Without loss of generality, assume $\abs{L''} \le \abs{R''}$, thereby $\abs{L''} \le \frac{n}{2}$.
        Now fix any $b \in L''$ adjacent to $a$; there are at least $\frac{\alpha n}{5}$ choices for $b$.  
        We have $\abs{N_{G}(a) \cap R''} \ge \frac{\alpha n}{5}$ and $\abs{N_{G''}(b)} \ge \delta(G'') > \frac{n}{4}$, and for all 
        $a' \in N_{G}(a) \cap R'', b' \in N_{G''}(b) \subseteq R''$ it holds that $\deg_{G''}(a',b') \ge 2\delta(G'') - \abs{L''} \ge \alpha n$.
       Therefore, by \cref{lem:ab-H-general}, $G$ has $\poly(\alpha)n^{h-2}$ copies of $H$ mapping $xy$ to $ab$. 
        Enumerating over all $a \in S''$ and $b \in N_G(a) \cap L''$,
        we again get $\Omega_{H,\alpha}(\varepsilon n^h)$ copies of $H$ in $G$. This completes the proof of \Cref{prop:bipartite}.  
    \end{proof} 
        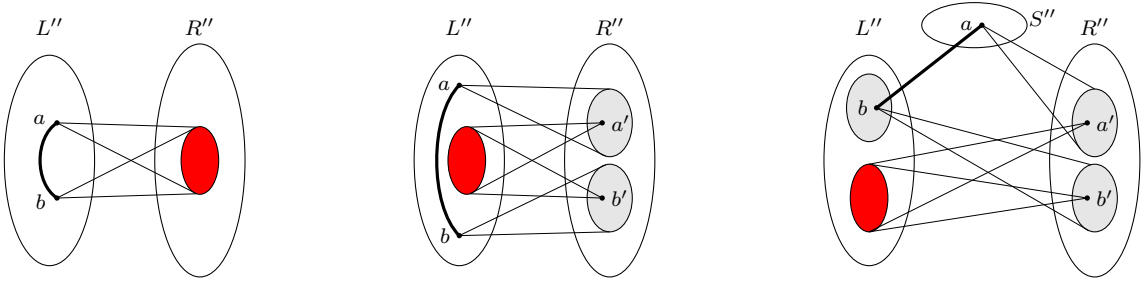
\begin{figure}
          \centering
          \begin{tikzpicture}[scale = 1]
            \coordinate (L) at (0,0);
            \coordinate (R) at (2,0);
            \draw (L) ellipse (0.6 and 1.4);
            \draw (R) ellipse (0.6 and 1.55);
            
            \coordinate (LL) at ($(L) + (0, 1.55)$);
            \coordinate (RR) at ($(R) + (0, 1.55)$);
            \draw (LL) node[above] {\footnotesize $L''$};
            \draw (RR) node[above] {\footnotesize $R''$};
            
            \coordinate (a) at ($(L) + ({0.1},{0.5})$);
            \coordinate (b) at ($(L) + ({0.1},{-0.5})$);
            \draw[black, very thick] (a)..controls(-0.2,0.3)and(-0.2,-0.3) ..(b);
            \draw (a) node[fill=black,circle,minimum size=2pt,inner sep=0pt] {};
            \draw (b) node[fill=black,circle,minimum size=2pt,inner sep=0pt] {};
            \coordinate (aa) at ($(a) + ({-0.03},{0.05})$);
            \coordinate (bb) at ($(b) + ({-0.03},{-0.05})$);
            \draw (aa) node[left] {\footnotesize $a$};
            \draw (bb) node[left] {\footnotesize $b$};
            
            \coordinate (C) at ($(R) + ({0, 0})$);
            \draw [black,fill=red] (C) ellipse (0.25 and 0.45);
            \coordinate (C1) at ($(C) + ({0.25*cos(100)}, {0.45*sin(100)})$);
            \coordinate (C2) at ($(C) + ({0.25*cos(260)}, {0.45*sin(260)})$);
            
            \draw[very thin] (a) -- (C1);
            \draw[very thin] (b) -- (C1);
            \draw[very thin] (a) -- (C2);
            \draw[very thin] (b) -- (C2);
          \end{tikzpicture}
          \hspace{2cm}
          \begin{tikzpicture}[scale = 1]
            \coordinate (L) at (0,0);
            \coordinate (R) at (2,0);
            \draw (L) ellipse (0.6 and 1.4);
            \draw (R) ellipse (0.6 and 1.55);
            
            \coordinate (LL) at ($(L) + (0, 1.55)$);
            \coordinate (RR) at ($(R) + (0, 1.55)$);
            \draw (LL) node[above] {\footnotesize $L''$};
            \draw (RR) node[above] {\footnotesize $R''$};
            
            \coordinate (a) at ($(L) + ({0},{1})$);
            \coordinate (b) at ($(L) + ({0},{-1})$);
            \draw[black, very thick] (a)..controls(-0.4,0.6)and(-0.4,-0.6) ..(b);
            \draw (a) node[fill=black,circle,minimum size=2pt,inner sep=0pt] {};
            \draw (b) node[fill=black,circle,minimum size=2pt,inner sep=0pt] {};
            \draw (a) node[left] {\footnotesize $a$};
            \draw (b) node[left] {\footnotesize $b$};
            
            \coordinate (A1) at ($(R) + ({0, 0.5})$);
            \draw [black,fill=gray!20] (A1) ellipse (0.3 and 0.45);
            \coordinate (A11) at ($(A1) + ({0, 0.45})$);
            \coordinate (A12) at ($(A1) + ({0.3*cos(260)}, {0.45*sin(260)})$);
            \coordinate (a1) at ($(A1) + ({-0.1, 0})$);
            \coordinate (B1) at ($(R) + ({0, -0.5})$);
            \draw [black,fill=gray!20] (B1) ellipse (0.3 and 0.45);
            \coordinate (B11) at ($(B1) + ({0.3*cos(100)}, {0.45*sin(100)})$);
            \coordinate (B12) at ($(B1) + ({0, -0.45})$);
            \coordinate (b1) at ($(B1) + ({-0.1, 0})$);
            
            \draw (a1) node[fill=black,circle,minimum size=2pt,inner sep=0pt] {};
            \draw (b1) node[fill=black,circle,minimum size=2pt,inner sep=0pt] {};
            \draw (a1) node[right] {\footnotesize $a'$};
            \draw (b1) node[right] {\footnotesize $b'$};
            
            \draw[very thin] (a) -- (A11);
            \draw[very thin] (a) -- (A12);
            \draw[very thin] (b) -- (B11);
            \draw[very thin] (b) -- (B12);
            
            \coordinate (C) at ($(L) + ({0.1, 0})$);
            \draw [black,fill=red] (C) ellipse (0.25 and 0.45);
            \coordinate (C1) at ($(C) + ({0.25*cos(80)}, {0.45*sin(80)})$);
            \coordinate (C2) at ($(C) + ({0.25*cos(280)}, {0.45*sin(280)})$);
            \draw[very thin] (a1) -- (C1);
            \draw[very thin] (a1) -- (C2);
            \draw[very thin] (b1) -- (C1);
            \draw[very thin] (b1) -- (C2);
          \end{tikzpicture}
          \hspace{2cm}
          \begin{tikzpicture}[scale=1]
            \coordinate (L) at (0,0);
            \coordinate (R) at (3,0);
            \draw (L) ellipse (0.6 and 1.4);
            \draw (R) ellipse (0.6 and 1.55);
            
            \coordinate (LL) at ($(L) + (0, 1.55)$);
            \coordinate (RR) at ($(R) + (0, 1.55)$);
            \draw (LL) node[above] {\footnotesize $L''$};
            \draw (RR) node[above] {\footnotesize $R''$};
            
            \coordinate (S) at (1.4,1.8);
            \draw (S) ellipse (0.7 and 0.3);
            \coordinate (SS) at ($(S) + (0.6, 0.1)$);
            \draw (SS) node[right] {\footnotesize $S''$};
            
            \coordinate (a) at ($(S) + ({0.1}, {0})$);
            \draw (a) node[fill=black,circle,minimum size=2pt,inner sep=0pt] {};
            \draw (a) node[left] {\footnotesize $a$};
            \coordinate (N) at ($(L) + ({0},{0.7})$);
            \draw [black,fill=gray!20] (N) ellipse (0.3 and 0.45);
            \coordinate (b) at ($(N) + ({0.1}, {0})$);
            \draw (b) node[fill=black,circle,minimum size=2pt,inner sep=0pt] {};
            \draw (b) node[left] {\footnotesize $b$};
            \draw[very thick] (a) -- (b);
            
            \coordinate (A1) at ($(R) + ({0, 0.5})$);
            \draw [black,fill=gray!20] (A1) ellipse (0.3 and 0.45);
            \coordinate (A11) at ($(A1) + ({0, 0.45})$);
            \coordinate (A12) at ($(A1) + ({0.3*cos(230)}, {0.45*sin(230)})$);
            \coordinate (a1) at ($(A1) + ({-0.1, 0})$);
            \coordinate (B1) at ($(R) + ({0, -0.5})$);
            \draw [black,fill=gray!20] (B1) ellipse (0.3 and 0.45);
            \coordinate (B11) at ($(B1) + ({0.3*cos(100)}, {0.45*sin(100)})$);
            \coordinate (B12) at ($(B1) + ({0.3*cos(255)},{ 0.45*sin(255)})$);
            \coordinate (b1) at ($(B1) + ({-0.1, 0})$);
            
            \draw (a1) node[fill=black,circle,minimum size=2pt,inner sep=0pt] {};
            \draw (b1) node[fill=black,circle,minimum size=2pt,inner sep=0pt] {};
            \draw (a1) node[right] {\footnotesize $a'$};
            \draw (b1) node[right] {\footnotesize $b'$};
            
            \draw[very thin] (a) -- (A11);
            \draw[very thin] (a) -- (A12);
            \draw[very thin] (b) -- (B11);
            \draw[very thin] (b) -- (B12);
            
            \coordinate (C) at ($(L) + ({0, -0.5})$);
            \draw [black,fill=red] (C) ellipse (0.25 and 0.45);
            \coordinate (C1) at ($(C) + ({0, 0.45})$);
            \coordinate (C2) at ($(C) + ({0, -0.45})$);
            \draw[very thin] (a1) -- (C1);
            \draw[very thin] (a1) -- (C2);
            \draw[very thin] (b1) -- (C1);
            \draw[very thin] (b1) -- (C2);
          \end{tikzpicture}
          \caption{Proof of \cref{prop:bipartite}: Case 1 with $\deg_G(a,b) \ge \frac{\alpha n}{2}$ (left), Case 1 with $\deg_G(a,b) < \frac{\alpha n}{2}$ (middle) and Case 2 (right). The red part is the common neighborhood of $a$ and $b$ (or $a'$ and $b'$).\label{fig:bipartite proof}}
        \end{figure}

    \begin{proposition}\label{prop:not bipartite}
  Suppose $G'$ is non-bipartite but homomorphic to $C_7$. Then $G$ has $\Omega_{H,\alpha}(\varepsilon n^h)$ copies \nolinebreak of \nolinebreak $H$.  
  \end{proposition}
    \begin{proof}
   By \cref{claim:shape of G'} we must have $k=2$ , so $\oddgirth(H) = 5$.
    The proof is similar to that of \Cref{prop:bipartite}, but instead of a bipartition of $G'$, we use a partition corresponding to a homomorphism into $C_7$. 
     Let $V(G) \backslash S = V(G') = V_1' \cupdot V_2' \cupdot \dots \cupdot V_7'$ be a partition of $V(G')$ such that $E(G') \subseteq \bigcup_{i \in [7]} V_i' \times V_{i+1}'$.
     Here and later, all subscripts are modulo 7.
     We have $V_i' \neq \emptyset$ for all $i \in [7]$, because otherwise $G'$ would be bipartite. 
    For $i \in [7]$, let $S_i$ be the set of vertices in $S$ having at most $\frac{2\alpha n}{5}$ neighbors in 
    $V(G') \setminus (V'_{i-1} \cup V'_{i+1})$.
    In case $v$ lies in multiple $S_i$'s, we put $v$ arbitrarily in one of them.
    Set $V_i'' := V_i' \cup S_i$.
    Let $G''$ be the 7-partite subgraph of $G$ with parts $V''_1,\dots,V''_7$ and with all edges of $G$ between $V_i''$ and $V_{i+1}''$, $i = 1,\dots,7$.
    By definition, $G'$ is a subgraph of $G''$, and $G''$ is homomorphic to $C_7$ via the homomorphism $V''_i \mapsto i$. Put $S'' := V(G) \backslash V(G'') = S \setminus \bigcup_{i=1}^7 S_i$.
    We now collect the following useful properties.
    \begin{claim}\label{claim:G'' case 2}
    The following holds:
    \begin{itemize}
    \item[(i)] {\em $\delta(G'') \ge (\frac{1}{4} + \frac{\alpha}{2}) n$.} 
      \item[(ii)] For every $i \in [7]$ and for every $u,v \in V''_i$ or $u \in V''_i, v \in V''_{i+2}$, it holds that $\deg_{G''}(u,v) \geq \alpha n$.
      \item[(iii)] For every $i \in [7]$, every $v \in V_i''$ has at least $\alpha n$ neighbors in $V_{i-1}''$ and at least $\alpha n$ neighbors in $V_{i+1}''$. 
      \item[(iv)]
      For every $a \in S''$, there are $i, j$ with $j-i \equiv 1,3 \pmod{7}$ and $\abs{N_G(a)\cap V_i''}, \abs{N_G(a)\cap V_j''} > \frac{2\alpha n}{25}$.
    \end{itemize}
    \end{claim}
    \noindent
    \begin{proof}
    {\color{white} fds}
        \begin{itemize}
        \item[(i)]
        Let $i \in [7]$ and $v \in V_i''$. 
        If $v \in V(G')$, then $\deg_{G''}(v) \ge \deg_{G'}(v) \ge \delta(G') > (\frac{1}{4} + \frac{\alpha}{2}) n$, using \Cref{claim:properties of G'}.
        Otherwise, $v \in S_i$.
        By definition, $v$ has at most $\frac{2\alpha n}{5}$ neighbours in $V(G') \setminus (V'_{i-1} \cup V'_{i+1})$. Also, $v$ has at most $\abs{S} \leq \frac{\alpha n}{10}$ neighbours in $S$. It follows that 
        $v$ has at least $\deg_G(v) - \frac{2\alpha n}{5} - \frac{\alpha n}{10} \ge (\frac{1}{4} + \frac{\alpha}{2}) n$ neighbors in $V_{i-1}'' \cup V_{i+1}''$. Hence, $\deg_{G''}(v) > (\frac{1}{4}+\frac{\alpha}{2}) n$.
        \item[(ii)]
        First, observe that 
        \begin{equation}\label{eq:i,i+2}
        \abs{V_{i}''} + \abs{V_{i+2}''} \ge 
        \left(\frac{1}{4}+\frac{\alpha}{2}\right) n
        \end{equation}
        for all $i \in [7]$. Indeed, $V''_{i+1}$ is non-empty, and fixing any $v \in V''_{i+1}$, we have 
        $\abs{V_{i}''} + \abs{V_{i+2}''} \geq \deg_{G''}(v) \geq \delta(G'') \geq (\frac{1}{4}+\frac{\alpha}{2}) n$. By applying \eqref{eq:i,i+2} to the pairs $(i+2,i+4)$ and $(i-2,i)$, we get
        \begin{equation}\label{eq:i-1,i+1,i+3}
        \abs{V''_{i-1}} + \abs{V''_{i+1}} + \abs{V''_{i+3}} \leq n - (\abs{V_{i+2}''} + \abs{V_{i+4}''}) - 
        (\abs{V_{i-2}''} + \abs{V_{i}''}) \leq n - 2\left(\frac{1}{4}+\frac{\alpha}{2}\right) n < \frac{n}{2}.
        \end{equation}
        Now let $i \in [7]$. For $u,v\in V''_i$ we have 
        $N_{G''}(u)\cup N_{G''}(v) \subseteq V_{i-1}'' \cup V_{i+1}''$, and for $u \in V''_i, v \in V''_{i+2}$ we have
        $N_{G''}(u)\cup N_{G''}(v) \subseteq V_{i-1}'' \cup V_{i+1}'' \cup V''_{i+3}$.
        In both cases, 
        $\abs{N_{G''}(u)\cup N_{G''}(v)} < \frac{n}{2}$ by \eqref{eq:i-1,i+1,i+3}.
        As
        $\deg_{G''}(u)+\deg_{G''}(v) \ge 2\delta(G'') \ge (\frac{1}{2} + \alpha) n$, we have $\deg_{G''}(u,v) > \alpha n$, as required.
        \item[(iii)] 
        We first argue that $|V''_i| \leq (\frac{1}{4} - \frac{3\alpha}{2})n$ for each $i \in [7]$. Indeed, by applying \eqref{eq:i,i+2} to the pairs $(i-1,i+1)$, $(i+2,i+4)$, $(i+3,i+5)$, we get
        $$
        \abs{V''_i} \leq n - 
        (\abs{V_{i-1}''} + \abs{V_{i+1}''}) - 
        (\abs{V_{i+2}''} + \abs{V_{i+4}''}) - 
        (\abs{V_{i+3}''} + \abs{V_{i+5}''}) \leq 
        n - 3\left(\frac{1}{4}+\frac{\alpha}{2}\right) n = \left(\frac{1}{4} - \frac{3\alpha}{2}\right)n.  
        $$
        Now, for every $v \in V''_i$, we have $N_{G''}(v) \subseteq V''_{i-1} \cup V''_{i+1}$ and $\abs{V''_{i-1}},\abs{V''_{i+1}} < (\frac{1}{4} - \frac{3\alpha}{2})n$. Hence, $v$ has at least $\deg_{G''}(v) - (\frac{1}{4} - \frac{3\alpha}{2})n \geq \alpha n$ neighbors in each of $V''_{i-1},V''_{i+1}$. 
        \item[(iv)]
        Let $I$ be the set of $i$ with $\abs{N_G(a) \cap V_i''} \geq \frac{2\alpha n}{25}$. If $I$ is empty, then $a$ has less than  $5\cdot \frac{2\alpha n}{25} =\frac{2\alpha n}{5}$ neighbors in 
    every $V(G') \setminus (V'_{i-1} \cup V'_{i+1})$ and therefore can not be in $S''$. Suppose for contradiction that there exist no $i,j \in I$ with $j-i \equiv 1,3 \pmod{7}$. 
    We claim that there is $j \in [7]$ such that $I \subseteq \{j,j+2\}$. 
    Fix an arbitrary $i \in I$. 
    Then, $i\pm 1, i \pm 3 \notin I$ by assumption.
    Also, at most one of $i+2,i-2$ is in $I$, because $(i-2) - (i+2) \equiv 3 \pmod{7}$.
    So $I \subseteq \{i,i+2\}$ or $I \subseteq \{i-2,i\}$, proving our claim that $I \subseteq \{j,j+2\}$ for some $j$. 
    By the definition of $I$, $a$ has at most $5 \cdot \frac{2\alpha n}{25}=\frac{2\alpha n}{5}$ neighbors in $V(G') \backslash (V_j' \cup V_{j+2}')$. Hence, $a \in S_{j+1}$.
        This contradicts the fact that $a \in S''$, as $S'' \cap S_{i+1} = \emptyset$. 
         \end{itemize}
    \end{proof}
      
    We continue with the proof of \Cref{prop:not bipartite}.
    Recall that the edges in $E(G) \setminus E(G'')$ are precisely the edges of $G$ not belonging to ${\bigcup_{i\in[7]} V''_i \times V''_{i+1}}$. 
    For an edge $ab \in E(G)\backslash E(G'')$, we say $ab$ is of type I if $a,b \in V(G'')$, and of type II if $a \in S''$ or $b \in S''$.
    Clearly, every edge in $E(G)\backslash E(G'')$ is either of type I or of type II. 
    Since $\oddgirth(H) = 5$ and $C_5$ is not homomorphic to $C_7$, every $H$-copy in $G$ must contain some edge of type I or of type II (or both).
    As $G$ has $\varepsilon n^2$ edge-disjoint $H$-copies, $G$ must have at least $\frac{\varepsilon n^2}{2}$ edges of type I or at least $\frac{\varepsilon n^2}{2}$ edges of type II.
    We consider these two cases separately. 
    See \cref{fig:non-bipartite proof} for an illustration.
    Recall that $xy \in E(H)$ denotes a critical edge of $H$. 
      \paragraph{Case 1:} {\em $G$ contains $\frac{\varepsilon n^2}{2}$ edges of type I. }
        Fix any edge $ab$ of type I, where $a \in V_i''$ and $b \in V_j''$ for $i,j \in [7]$. 
        We now show that $G$ has $\poly(\alpha) n^{h-2}$ copies of $H$ mapping $xy \in E(H)$ to $ab$.
        As $ab \notin E(G'')$, we have $i-j \equiv 0,\pm 2,\pm 3 \pmod{7}$.
        When $j-i \equiv 0,\pm 2 \pmod{7}$, we have $\deg_G(a,b) \ge \deg_{G''}(a,b) > \alpha n$ by \Cref{claim:G'' case 2} (ii).
        Then, by \cref{lem:ab-H-codegree}, $G$ has $\poly(\alpha) n^{h-2}$ copies of $H$ mapping $xy$ to $ab$, as required. 
        Now suppose that $j-i \equiv \pm 3 \pmod{7}$, say $j \equiv i+3 \pmod{7}$.
        Denote $A := N_{G}(a) \cap V_{i-1}''$ and $B := N_G(b) \cap V_{j+1}''=N_G(b)\cap V_{i-3}''$.
        We have that $\abs{A}, \abs{B} \ge \alpha n$ by \Cref{claim:G'' case 2} (iii), and $\abs{N_G(a',b')} > \alpha n$ for all $a' \in A, b' \in B$ by \Cref{claim:G'' case 2} (ii).
        Now, by \cref{lem:ab-H-general}, $G$ has $\poly(\alpha) n^{h-2}$ copies of $H$ mapping $xy$ to $ab$, proving our claim.
        Summing over all edges $ab$ of type I, we get $\frac{\varepsilon n^2}{2} \cdot \poly(\alpha) n^{h-2}=\Omega_{H,\alpha}(\varepsilon n^h)$ copies of $H$ in $G$, finishing this case.

      \paragraph{Case 2:} {\em $G$ contains $\frac{\varepsilon n^2}{2}$ edges of type II.}
        Notice that the number edges incident to $S''$ is at most $\abs{S''} n$, meaning that $\abs{S''} \ge \frac{\varepsilon n}{2}$.
        Fix any $a \in S''$.
        By \Cref{claim:G'' case 2} (iv), there exist $i, j \in [7]$ with $j-i \equiv 1,3 \pmod{7}$ and $\abs{N_G(a)\cap V_i''}, \abs{N_G(a)\cap V_j''} > \frac{2\alpha n}{25}$.
        Fix any $b \in N_G(a) \cap V_i''$ (there are at least $\frac{2\alpha n}{25}$ choices for $b$). Take $A = N_G(a) \cap V_{j}''$ and $B = N_G(b) \cap V_{i+1}''$.
        We have that $\abs{A} \ge \frac{2\alpha n}{25}$, and $\abs{B} \ge \alpha n$ by \Cref{claim:G'' case 2} (iii).
        Further, as $j - (i+1) \equiv 0,2 \pmod{7}$, \Cref{claim:G'' case 2} (ii) implies that $\abs{N_G(a',b')} > \alpha n$ for all $a' \in A, b' \in B$.
        Now, by \cref{lem:ab-H-general}, $G$ has $\poly(\alpha) n^{h-2}$ copies of $H$ mapping $xy$ to $ab$.
        Summing over all choices of $a \in S''$ and $b \in V_{i}''$, we acquire $\abs{S''}\cdot \frac{2\alpha n}{25} \cdot \poly(\alpha) n^{h-2}=\Omega_{H,\alpha}(\varepsilon n^h)$ copies of $H$ in $G$. This completes the proof of Case 2, and hence the proposition. 
    \hfill
    \end{proof}
    \begin{figure}
          \centering
          \begin{tikzpicture}[scale = 0.9]
            \coordinate (V1) at (-0.85,0);
            \coordinate (V2) at (0.85,0);
            \coordinate (V7) at (-1.6,-1);
            \coordinate (V3) at (1.6,-1);
            \coordinate (V6) at (-1.45,-2);
            \coordinate (V4) at (1.45,-2);
            \coordinate (V5) at (0,-2.75);
            \draw (V1) ellipse (0.6 and 0.3);
            \draw (V2) ellipse (0.6 and 0.3);
            \draw (V3) ellipse (0.6 and 0.3);
            \draw (V4) ellipse (0.6 and 0.3);
            \draw (V5) ellipse (0.6 and 0.3);
            \draw (V6) ellipse (0.6 and 0.3);
            \draw (V7) ellipse (0.6 and 0.3);
            \coordinate (VV1) at ($(V1) + (-0.85, 0)$);
            \coordinate (VV2) at ($(V2) + (0.85, 0)$);
            \coordinate (VV3) at ($(V3) + (0.85, 0)$);
            \coordinate (VV4) at ($(V4) + (0.85, 0)$);
            \coordinate (VV5) at ($(V5) + (0.85, 0)$);
            \coordinate (VV6) at ($(V6) + (-0.85, 0)$);
            \coordinate (VV7) at ($(V7) + (-0.85, 0)$);
            \draw (VV1) node {\footnotesize $V_1''$};
            \draw (VV2) node {\footnotesize $V_2''$};
            \draw (VV3) node {\footnotesize $V_3''$};
            \draw (VV4) node {\footnotesize $V_4''$};
            \draw (VV5) node {\footnotesize $V_5''$};
            \draw (VV6) node {\footnotesize $V_6''$};
            \draw (VV7) node {\footnotesize $V_7''$};
            
            \coordinate (a) at ($(V1) + ({0},{0})$);
            \coordinate (b) at ($(V3) + ({0},{0})$);
            \draw[black, very thick] (a) -- (b);
            \draw (a) node[fill=black,circle,minimum size=2pt,inner sep=0pt] {};
            \draw (b) node[fill=black,circle,minimum size=2pt,inner sep=0pt] {};
            \draw (a) node[left] {\footnotesize $a$};
            \draw (b) node[right] {\footnotesize $b$};
            
            \coordinate (C) at ($(V2) + ({0, 0})$);
            \draw [black,fill=red] (C) ellipse (0.35 and 0.15);
            \coordinate (Ca1) at ($(C) + ({0.35*cos(100)}, {0.15*sin(100)})$);
            \coordinate (Ca2) at ($(C) + ({0.35*cos(260)}, {0.15*sin(260)})$);
            \coordinate (Cb1) at ($(C) + ({0.35*cos(200)}, {0.15*sin(200)})$);
            \coordinate (Cb2) at ($(C) + ({0.35*cos(10)}, {0.15*sin(10)})$);
            
            \draw[very thin] (a) -- (Ca1);
            \draw[very thin] (a) -- (Ca2);
            \draw[very thin] (b) -- (Cb1);
            \draw[very thin] (b) -- (Cb2);
          \end{tikzpicture}
          \hspace{0.4cm}
          \begin{tikzpicture}[scale = 0.9]
            \coordinate (V1) at (-0.85,0);
            \coordinate (V2) at (0.85,0);
            \coordinate (V7) at (-1.6,-1);
            \coordinate (V3) at (1.6,-1);
            \coordinate (V6) at (-1.45,-2);
            \coordinate (V4) at (1.45,-2);
            \coordinate (V5) at (0,-2.75);
            \draw (V1) ellipse (0.6 and 0.3);
            \draw (V2) ellipse (0.6 and 0.3);
            \draw (V3) ellipse (0.6 and 0.3);
            \draw (V4) ellipse (0.6 and 0.3);
            \draw (V5) ellipse (0.6 and 0.3);
            \draw (V6) ellipse (0.6 and 0.3);
            \draw (V7) ellipse (0.6 and 0.3);
            \coordinate (VV1) at ($(V1) + (-0.85, 0)$);
            \coordinate (VV2) at ($(V2) + (0.85, 0)$);
            \coordinate (VV3) at ($(V3) + (0.85, 0)$);
            \coordinate (VV4) at ($(V4) + (0.85, 0)$);
            \coordinate (VV5) at ($(V5) + (0.85, 0)$);
            \coordinate (VV6) at ($(V6) + (-0.85, 0)$);
            \coordinate (VV7) at ($(V7) + (-0.85, 0)$);
            \draw (VV1) node {\footnotesize $V_1''$};
            \draw (VV2) node {\footnotesize $V_2''$};
            \draw (VV3) node {\footnotesize $V_3''$};
            \draw (VV4) node {\footnotesize $V_4''$};
            \draw (VV5) node {\footnotesize $V_5''$};
            \draw (VV6) node {\footnotesize $V_6''$};
            \draw (VV7) node {\footnotesize $V_7''$};
            
            \coordinate (a) at ($(V1) + ({0},{0})$);
            \coordinate (b) at ($(V4) + ({0},{0})$);
            \draw[black, very thick] (a) -- (b);
            \draw (a) node[fill=black,circle,minimum size=2pt,inner sep=0pt] {};
            \draw (b) node[fill=black,circle,minimum size=2pt,inner sep=0pt] {};
            \draw (a) node[left] {\footnotesize $a$};
            \draw (b) node[right] {\footnotesize $b$};
            
            \coordinate (A1) at ($(V7) + ({0, 0})$);
            \draw [black,fill=gray!20] (A1) ellipse (0.4 and 0.23);
            \coordinate (A11) at ($(A1) + ({0.4*cos(145)}, {0.23*sin(145)})$);
            \coordinate (A12) at ($(A1) + ({0.4*cos(350)}, {0.23*sin(350)})$);
            \coordinate (a1) at ($(A1) + ({-0.1, 0})$);
            \coordinate (B1) at ($(V5) + ({0, 0})$);
            \draw [black,fill=gray!20] (B1) ellipse (0.4 and 0.23);
            \coordinate (B11) at ($(B1) + ({0.4*cos(110)}, {0.23*sin(110)})$);
            \coordinate (B12) at ($(B1) + ({0.4*cos(320)}, {0.23*sin(320)})$);
            \coordinate (b1) at ($(B1) + ({-0.1, 0})$);
            
            \draw (a1) node[fill=black,circle,minimum size=2pt,inner sep=0pt] {};
            \draw (b1) node[fill=black,circle,minimum size=2pt,inner sep=0pt] {};
            \draw (a1) node[right] {\footnotesize $a'$};
            \draw (b1) node[right] {\footnotesize $b'$};
            \draw[very thin] (a) -- (A11);
            \draw[very thin] (a) -- (A12);
            \draw[very thin] (b) -- (B11);
            \draw[very thin] (b) -- (B12);
            
            \coordinate (C) at ($(V6) + ({0, 0})$);
            \draw [black,fill=red] (C) ellipse (0.35 and 0.15);
            \coordinate (Ca1) at ($(C) + ({0.35*cos(170)}, {0.15*sin(170)})$);
            \coordinate (Ca2) at ($(C) + ({0.35*cos(30)}, {0.15*sin(30)})$);
            \coordinate (Cb1) at ($(C) + ({0.35*cos(230)}, {0.15*sin(230)})$);
            \coordinate (Cb2) at ($(C) + ({0.35*cos(40)}, {0.15*sin(40)})$);
            
            \draw[very thin] (a1) -- (Ca1);
            \draw[very thin] (a1) -- (Ca2);
            \draw[very thin] (b1) -- (Cb1);
            \draw[very thin] (b1) -- (Cb2);
          \end{tikzpicture}
          \hspace{0.4cm}
          \begin{tikzpicture}[scale = 0.9]
            \coordinate (V1) at (-0.85,0);
            \coordinate (V2) at (0.85,0);
            \coordinate (V7) at (-1.6,-1);
            \coordinate (V3) at (1.6,-1);
            \coordinate (V6) at (-1.45,-2);
            \coordinate (V4) at (1.45,-2);
            \coordinate (V5) at (0,-2.75);
            \draw (V1) ellipse (0.6 and 0.3);
            \draw (V2) ellipse (0.6 and 0.3);
            \draw (V3) ellipse (0.6 and 0.3);
            \draw (V4) ellipse (0.6 and 0.3);
            \draw (V5) ellipse (0.6 and 0.3);
            \draw (V6) ellipse (0.6 and 0.3);
            \draw (V7) ellipse (0.6 and 0.3);
            \coordinate (VV1) at ($(V1) + (-0.85, 0)$);
            \coordinate (VV2) at ($(V2) + (0.85, 0)$);
            \coordinate (VV3) at ($(V3) + (0.85, 0)$);
            \coordinate (VV4) at ($(V4) + (0.85, 0)$);
            \coordinate (VV5) at ($(V5) + (0.85, 0)$);
            \coordinate (VV6) at ($(V6) + (-0.85, 0)$);
            \coordinate (VV7) at ($(V7) + (-0.85, 0)$);
            \draw (VV1) node {\footnotesize $V_1''$};
            \draw (VV2) node {\footnotesize $V_2''$};
            \draw (VV3) node {\footnotesize $V_3''$};
            \draw (VV4) node {\footnotesize $V_4''$};
            \draw (VV5) node {\footnotesize $V_5''$};
            \draw (VV6) node {\footnotesize $V_6''$};
            \draw (VV7) node {\footnotesize $V_7''$};
            
            \coordinate (S) at (0, -1.3);
            \draw (S) ellipse (0.4 and 0.35);
            \coordinate (SS) at ($(S) + (-0.7, 0)$);
            \draw (SS) node {\footnotesize $S''$};
            
            \coordinate (a) at ($(S) + ({0},{0})$);
            \coordinate (b) at ($(V1) + ({0},{0})$);
            \draw[black, very thick] (a) -- (b);
            \draw (a) node[fill=black,circle,minimum size=2pt,inner sep=0pt] {};
            \draw (b) node[fill=black,circle,minimum size=2pt,inner sep=0pt] {};
            \draw (a) node[left] {\footnotesize $a$};
            \draw (b) node[left] {\footnotesize $b$};
            
            \coordinate (A1) at ($(V4) + ({0, 0})$);
            \draw [black,fill=gray!20] (A1) ellipse (0.4 and 0.23);
            \coordinate (A11) at ($(A1) + ({0.4*cos(220)}, {0.23*sin(220)})$);
            \coordinate (A12) at ($(A1) + ({0.4*cos(70)}, {0.23*sin(70)})$);
            \coordinate (a1) at ($(A1) + ({-0.1, 0})$);
            \coordinate (B1) at ($(V2) + ({0, 0})$);
            \draw [black,fill=gray!20] (B1) ellipse (0.4 and 0.23);
            \coordinate (B11) at ($(B1) + ({0.4*cos(105)}, {0.23*sin(105)})$);
            \coordinate (B12) at ($(B1) + ({0.4*cos(255)}, {0.23*sin(255)})$);
            \coordinate (b1) at ($(B1) + ({0.2, 0})$);
            
            \draw (a1) node[fill=black,circle,minimum size=2pt,inner sep=0pt] {};
            \draw (b1) node[fill=black,circle,minimum size=2pt,inner sep=0pt] {};
            \draw (a1) node[right] {\footnotesize $a'$};
            \draw (b1) node[left] {\footnotesize $b'$};
            \draw[very thin] (a) -- (A11);
            \draw[very thin] (a) -- (A12);
            \draw[very thin] (b) -- (B11);
            \draw[very thin] (b) -- (B12);

            \coordinate (C) at ($(V3) + ({0, 0})$);
            \draw [black,fill=red] (C) ellipse (0.35 and 0.15);
            \coordinate (Ca1) at ($(C) + ({0.35*cos(185)}, {0.15*sin(185)})$);
            \coordinate (Ca2) at ($(C) + ({0.35*cos(340)}, {0.15*sin(340)})$);
            \coordinate (Cb1) at ($(C) + ({0.35*cos(185)}, {0.15*sin(185)})$);
            \coordinate (Cb2) at ($(C) + ({0.35*cos(20)}, {0.15*sin(20)})$);
            
            \draw[very thin] (a1) -- (Ca1);
            \draw[very thin] (a1) -- (Ca2);
            \draw[very thin] (b1) -- (Cb1);
            \draw[very thin] (b1) -- (Cb2);
          \end{tikzpicture}
          \caption{Proof of \cref{prop:not bipartite}: Case 1 for $j=i+2$ (left), Case 1 for $j=i+3$ (middle) and Case 2 for $j=i+3$ (right). The red part is the common neighborhood of $a$ and $b$ (or $a'$ and $b'$).\label{fig:non-bipartite proof}}
        \end{figure}
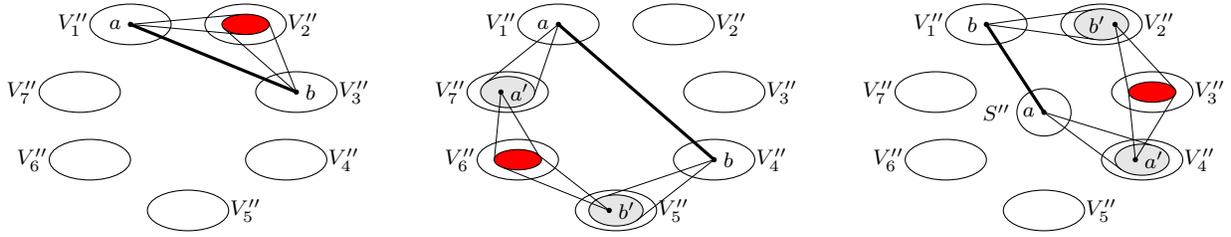
    \noindent   Propositions~\ref{prop:bipartite} and~\ref{prop:not bipartite} imply the theorem. 
    \end{proof}
    \section{Concluding remarks and open questions}
        It would be interesting to determine the possible values of $\polythres(H)$ for $3$-chromatic graphs $H$. So far we know that $\frac{1}{2k+1}$ is a value for each $k \geq 1$. Is there a graph $H$ with $\frac{1}{5} < \polythres(H) < \frac{1}{3}$? Also, is it true that $\polythres(H) > \frac{1}{5}$ if $H$ is not homomorphic to $C_5$?

        Another question is whether the inequality in \Cref{thm:poly-hom} is always tight, i.e. is it always true that $\polythres(H) = \delta_{\text{hom}}(\mathcal{I}_H)$?
        
        Finally, we wonder whether the parameters $\polythres(H)$ and $\linearthres(H)$ are monotone, in the sense that they do not increase when passing to a subgraph of $H$. We are not aware of a way of proving this without finding $\polythres(H),\linearthres(H)$.

    \bibliographystyle{abbrv} 
    \bibliography{library}

\end{document}